\def\bZ{\mathbb{Z}}
\def\bR{\mathbb{R}}
\def\bC{\mathbb{C}}
\def\bP{\mathbb{P}}
\newtheorem{thm}{Theorem}[section]
\newtheorem{lem}{Lemma}[section]
\newtheorem{cor}{Corollary}[section]
\newtheorem{prop}{Proposition}[section]
\newtheorem{defn}{Definition}[section]
\newtheorem{nb}{Remark}[section]
\numberwithin{equation}{section}
\begin{document}

\title[HMS for $A_n$-resolutions as a $T$-duality]{Homological mirror symmetry\\ for $A_n$-resolutions as a $T$-duality}
\author[K. Chan]{Kwokwai Chan}
\address{Department of Mathematics\\ The Chinese University of Hong Kong\\ Shatin \\ Hong Kong}
\email{kwchan@math.cuhk.edu.hk}

\begin{abstract}
We study Homological Mirror Symmetry (HMS) for $A_n$-resolutions from the SYZ viewpoint. Let $X\to\bC^2/\bZ_{n+1}$ be the crepant resolution of the $A_n$-singularity. The mirror of $X$ is given by a smoothing $\check{X}$ of $\bC^2/\bZ_{n+1}$. Using SYZ transformations, we construct a geometric functor from a derived Fukaya category of $\check{X}$ to the derived category of coherent sheaves on $X$. We show that this is an equivalence of triangulated categories onto a full triangulated subcategory of $D^b(X)$, thus realizing Kontsevich's HMS conjecture by SYZ.
\end{abstract}
\keywords{Homological mirror symmetry, $T$-duality, SYZ transformation, $A_n$-singularity, toric Calabi-Yau surface.}
\subjclass[2010]{Primary 53D37, 14J33; Secondary 53D12, 14M25}

\maketitle

\tableofcontents

\section{Introduction}

In his 1994 ICM address \cite{K94}, Kontsevich proposed a beautiful categorical framework to understand mirror symmetry. He conjectured that for a mirror pair of Calabi-Yau manifolds $Y$ and $\check{Y}$, there exists an equivalence of triangulated categories between the derived Fukaya category $D^b\textrm{Fuk}(Y)$ of $Y$ and the derived category of coherent sheaves $D^b(\check{Y})$ of $\check{Y}$ (and vice versa). This is Kontsevich's famous \emph{Homological Mirror Symmetry (HMS) conjecture}. This was later extended, again by Kontsevich \cite{K98}, to the non-Calabi-Yau setting as well.

The HMS conjecture has been verified for elliptic curves \cite{PZ98}, abelian varieties \cite{F02,KS00}, quartic K3 surfaces \cite{S03}, Calabi-Yau hypersurfaces in projective spaces \cite{S11}, del Pezzo surfaces \cite{AKO06} and projective toric manifolds \cite{S01,U06,AKO08,Ab09,F08,FLTZ08}. However, the verification was usually done by computing both sides and then either equating them to a third category (e.g. category of quiver representations) or applying deformation arguments \cite{S02}. In particular, there is often no explicit construction of the functor implementing the HMS equivalence.

In 1996, Strominger, Yau and Zaslow \cite{SYZ96} proposed the celebrated \emph{SYZ conjecture} asserting that mirror manifolds should admit dual Lagrangian torus fibrations, and the interchange of A-branes (Lagrangian cycles) on $Y$ and B-branes (holomorphic cycles) on $\check{Y}$ should be carried out by Fourier-Mukai type transformations (which we call \emph{SYZ transformations}). This suggests a way to construct a canonical and geometric functor
\begin{equation*}
\mathcal{F}:D^b\textrm{Fuk}(Y)\to D^b(\check{Y})
\end{equation*}
implementing the HMS equivalence when a mirror pair $Y$ and $\check{Y}$ are realized as dual Lagrangian torus fibrations.

In the case of dual Lagrangian torus bundles, the construction of such a functor was done by Arinkin-Polishchuk \cite{AP01} and Leung-Yau-Zaslow \cite{LYZ00}. It is a real version of the Fourier-Mukai transform on families of real tori and it transforms a Lagrangian section into a holomorphic line bundle. This construction has then been applied, implicitly or explicitly, to the study of HMS for elliptic curves \cite{AP01} and projective toric manifolds \cite{Ab09,F08,FLTZ08,C09}.

It is desirable to generalize the constructions of \cite{AP01, LYZ00} to more general cases. A serious obstacle in doing so is the presence of singular fibers in Lagrangian torus fibrations. In this case the mirror manifold cannot be obtained by fiberwise dualization ($T$-duality) alone. As shown by Kontsevich-Soibelman \cite{KS04}, Gross-Siebert \cite{GS07} and Auroux \cite{A07,A09}, one needs to incorporate \emph{instanton corrections} and modify the gluing of the complex structure on the mirror space accordingly. From this perspective, in order to construct a geometric functor realizing the HMS equivalence in general, we must take instanton corrections into account. The goal of this paper is to construct an instanton-corrected geometric functor (using SYZ transformations) which realizes the HMS equivalence for $A_n$-resolutions.

Let $Y$ be the affine hypersurface
\begin{equation*}
Y=\{(u,v,z)\in\bC^2\times\bC^\times|uv=f(z)\},
\end{equation*}
where $f(z)\in\bC[z]$ is a degree $n+1$ polynomial with distinct zeros. We equip $Y$ with the symplectic structure
\begin{equation*}
\omega=-\frac{\sqrt{-1}}{2}\Bigg(du\wedge d\bar{u}+dv\wedge d\bar{v}+\frac{dz\wedge d\bar{z}}{|z|^2}\Bigg)\Bigg|_Y.
\end{equation*}
We will show that the mirror of $(Y,\omega)$ is given by the complement $\check{Y}$ of a hypersurface in the crepant resolution $X\to\bC^2/\bZ_{n+1}$ of the $A_n$-singularity. Then our main result is the following (see Theorem \ref{main_thm} and Corollary \ref{main_cor})\\

\noindent\textbf{Main Theorem.} \emph{The SYZ transformations give an equivalence of triangulated categories
\begin{equation*}
\mathcal{F}:D^b\textrm{Fuk}_0(Y,\omega)\overset{\cong}{\longrightarrow}D^b_0(\check{Y}).
\end{equation*}
Here, $D^b\textrm{Fuk}_0(Y,\omega)$ is the derived Fukaya category generated by an $A_n$-configuration of graded Lagrangian spheres in $Y$ and $D^b_0(\check{Y})$ is a full triangulated subcategory of the derived category of coherent sheaves on $\check{Y}$ supported at the exceptional divisor.}\\

To define the geometric functor $\mathcal{F}$ and prove this theorem, we first need to construct the instanton-corrected mirror of $(Y,\omega)$ using SYZ. Following \cite{Go01,G01}, we construct a Lagrangian torus fibration on $Y$. This fibration has singular fibers, but the discriminant locus and the structure of the singular fibers can be described explicitly. According to the general philosophy of the SYZ proposal \cite{SYZ96}, the mirror $\check{Y}$ should be constructed using a $T$-duality (i.e. fiberwise dualization) modified by suitable instanton corrections (counting of holomorphic disks bounded by the Lagrangian torus fibers), as have been done in \cite{A07,A09,C10,CLL11,LLW10}. We demonstrate this construction in details in Section \ref{Mirror}.

The upshot here is that the constructions of \cite{AP01, LYZ00} is compatible with instanton corrections and hence can naturally be generalized in this situation. We are thus able to construct the desired explicit geometric functor $\mathcal{F}$ using SYZ transformations. To show that this realizes the HMS equivalence, we have to compute the SYZ transformations of certain Lagrangian submanifolds. The key observation is that although the connection over the mirror cycle is difficult to compute, the holomorphic structure of the underlying line bundle can be determined rather explicitly. See Section \ref{HMS} for the precise constructions and statements.

Section \ref{SYZ_transform} contains a review and a slight generalization of the constructions of SYZ transformations in \cite{AP01,LYZ00}. In Section \ref{Higher_dim}, we comment on the higher dimensional generalizations of our constructions.

During the final stage of the preparation of this paper, we learned of a recent preprint by Pascaleff \cite{P11} working on a similar problem for the complement of a binodal cubic curve in $\bP^2$. However, we shall point out that the approach and methods of that paper are quite different from ours; in particular, SYZ transformations were not explicitly used there.

\section*{Acknowledgment}
The author would like to thank Kazushi Ueda for useful discussions during the GCOE Conference ``Derived Categories 2011 Tokyo" held in the University of Tokyo in January 2011. He would also like to thank the referee for a careful reading and thoughtful suggestions. Part of this work was done when the author was visiting IH\'ES and working as a project researcher at IPMU, University of Tokyo. He is grateful to both institutes for financial support and providing an excellent research environment. This research was partially supported by a RGC Direct Grant for Research 2011/2012 (Project ID: CUHK2060434).

\section{SYZ transformations}\label{SYZ_transform}

In this section, we review the constructions of SYZ transformations by Arinkin-Polishchuk \cite{AP01} and Leung-Yau-Zaslow \cite{LYZ00}. We will mainly follow \cite{LYZ00}. For a much more detailed description of mirror symmetry without instanton corrections, see Leung \cite{L05}.

Let $B$ be an $n$-dimensional integral affine manifold. This means that the transition maps lie inside the group of affine linear transformations $\textrm{Aff}(\bZ^n):=\textrm{GL}_n(\bZ)\rtimes\bZ^n$. Let $\Lambda\subset TB$ be the family of lattices locally generated by $\partial/\partial x_1,\ldots,\partial/\partial x_n$, where $x_1,\ldots,x_n$ are local affine coordinates on $B$. Dually, let $\Lambda^\vee\subset T^*B$ be the family of lattices locally generated by $dx_1,\ldots,dx_n$. Consider the manifolds
\begin{eqnarray*}
X(B) & := & TB/\Lambda,\\
\check{X}(B) & := & T^*B/\Lambda^\vee.
\end{eqnarray*}

Observe that $X(B)$ is naturally a complex manifold: Let $y_1,\ldots,y_n$ be the fiber coordinates on $TB$, i.e. $(x_1,\ldots,x_n,y_1,\ldots,y_n)\in TB$ denotes the tangent vector $\sum_{j=1}^n y_j\partial/\partial x_j$ at the point $(x_1,\ldots,x_n)\in B$. Then the complex coordinates on $X(B)$ are given by
\begin{equation*}
z_j:=\exp2\pi(x_j+\sqrt{-1}y_j),\ j=1,\ldots,n.
\end{equation*}

On the other hand, $\check{X}(B)$ is naturally a symplectic manifold: Let $\xi_1,\ldots,\xi_n$ be the fiber coordinates on $T^*B$, i.e. $(x_1,\ldots,x_n,\xi_1,\ldots,\xi_n)\in T^*B$ denotes the cotangent vector $\sum_{j=1}^n\xi_jdx_j$ at the point $(x_1,\ldots,x_n)\in B$. Then the canonical symplectic structure on $\check{X}(B)$ is given by
\begin{equation*}
\omega:=\sum_{j=1}^n dx_j\wedge d\xi_j.
\end{equation*}

The projection map
\begin{equation*}
\rho:\check{X}(B)\to B
\end{equation*}
is a Lagrangian torus fibration on $\check{X}(B)$. The torus bundle
\begin{equation*}
\check{\rho}:X(B)\to B
\end{equation*}
is the fiberwise dualization of $\rho$. The SYZ proposal \cite{SYZ96} suggests that we should view $X(B)$ and $\check{X}(B)$ as a mirror pair. This is mirror symmetry without quantum corrections.\footnote{We can also equip $\check{X}(B)$ with a compatible complex structure and $X(B)$ with a compatible symplectic structure making both $\check{X}(B)$ and $X(B)$ K\"ahler manifolds; see \cite{L05,CL10b}.}

In Leung-Yau-Zaslow \cite{LYZ00} (see also Arinkin-Polishchuk \cite{AP01}), a Fourier-Mukai-type transformation $\mathcal{F}$ carrying A-branes (i.e. Lagrangian cycles) on $\check{X}(B)$ to B-branes (i.e. holomorphic cycles) on $X(B)$ is constructed. $\mathcal{F}$ is an example of SYZ transformations \cite{CL10a,CL10b}. Let us briefly review their construction.

The key idea is to view the dual $T^*$ of a torus $T$ as the moduli space of flat $U(1)$-connections on the trivial line bundle $\underline{\bC}:=\bC\times T$ over $T$. This is indeed the basis underlying the SYZ proposal \cite{SYZ96}. Now, if we have a section $L=\{(x,\xi(x))\in\check{X}(B)|x\in B\}$ of the fibration $\rho:\check{X}(B)\to B$. Then each point $(x,\xi(x))\in L$ corresponds to a flat $U(1)$-connection $\nabla_{\xi(x)}$ over the dual fiber $\check{\rho}^{-1}(x)\subset X(B)$. The family of connections $\{\nabla_{\xi(x)}|x\in B\}$ patch together to give a $U(1)$-connection over $X(B)$. By a straightforward calculation, one can show that the connection defines a holomorphic structure on a holomorphic line bundle over $X(B)$ (i.e. the $(0,2)$-part of the curvature two-form is trivial) if and only if the section $L\subset\check{X}(B)$ is Lagrangian. See \cite[Section 3.1]{LYZ00} for a detailed explanation.

For the purpose of this paper, we need a generalization of this construction. Consider a submanifold $S\subset B$ locally defined by $x_j=c_j$ for $j=k+1,\ldots,n$, where $c_j\in\bR$, $j=k+1,\ldots,n$, are constants. Consider a subspace in $\check{X}(B)$ of the form
\begin{eqnarray*}
L & = & \{(x_1,\ldots,x_n,\xi_1,\ldots,\xi_n)\in\check{X}(B)|\\
  &   & \qquad\qquad(x_1,\ldots,x_n)\in S\textrm{ and }\xi_j=\xi_j(x_1,\ldots,x_k)\textrm{ for $j=1,\ldots,k$}\},
\end{eqnarray*}
where $\xi_j=\xi_j(x_1,\ldots,x_k)$, $j=1,\ldots,k$, are $C^\infty$ functions on $S$. Geometrically, $L$ is a translate of the conormal bundle of the integral affine linear subspace $S\subset B$.
\begin{prop}\label{prop2.1}
$L$ is Lagrangian in $\check{X}(B)$ with respect to $\omega$ if and only if
\begin{equation*}
\frac{\partial \xi_j}{\partial x_l}=\frac{\partial \xi_l}{\partial x_j}
\end{equation*}
for any $j,l=1,\ldots,k$.
\end{prop}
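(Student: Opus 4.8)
The plan is to characterize $L$ as a Lagrangian via the standard criterion that $\omega$ restricts to zero on $L$, which applies once we confirm that $L$ has the correct (middle) dimension. First I would carry out the dimension count: since $L$ is parametrized by the coordinates $(x_1,\ldots,x_k,\xi_{k+1},\ldots,\xi_n)$ — the base coordinates along $S$ together with the conormal fiber directions left free in the translate of $N^*S$ — it is an $n$-dimensional submanifold of the $2n$-dimensional $\check{X}(B)$. Hence $L$ is of middle dimension, and it is Lagrangian precisely when it is isotropic, i.e. when $\omega|_L=0$.

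Next I would compute this restriction explicitly. On $L$ we have $x_j\equiv c_j$ for $j=k+1,\ldots,n$, so each $dx_j$ pulls back to $0$ for these indices; and $\xi_j=\xi_j(x_1,\ldots,x_k)$ for $j=1,\ldots,k$, so $d\xi_j$ pulls back to $\sum_{l=1}^k (\partial\xi_j/\partial x_l)\,dx_l$. Substituting these into $\omega=\sum_{j=1}^n dx_j\wedge d\xi_j$, the terms with $j>k$ drop out because their $dx_j$ vanish, leaving
\[
\omega|_L = \sum_{j=1}^k dx_j \wedge d\xi_j = \sum_{j,l=1}^k \frac{\partial \xi_j}{\partial x_l}\, dx_j \wedge dx_l.
\]

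Finally I would invoke the antisymmetry of the wedge product to rewrite this double sum as $\sum_{j<l}\big(\partial\xi_j/\partial x_l-\partial\xi_l/\partial x_j\big)\,dx_j\wedge dx_l$, whose vanishing is equivalent to the stated symmetry $\partial\xi_j/\partial x_l=\partial\xi_l/\partial x_j$ for all $j,l=1,\ldots,k$. There is no genuine obstacle here: the computation is routine. The only two points deserving care are the dimension count that legitimizes passing from ``isotropic'' to ``Lagrangian,'' and the bookkeeping of which coordinate differentials survive the pullback to $L$ — namely that $dx_j=0$ for $j>k$ collapses the sum to indices $\le k$, after which the $\xi_j$ for $j>k$ (the free conormal directions) play no role in $\omega|_L$.
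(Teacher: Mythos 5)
Your proposal is correct and follows essentially the same route as the paper, which simply records the pullback $\omega|_L=\sum_{j>l}\bigl(\partial\xi_j/\partial x_l-\partial\xi_l/\partial x_j\bigr)\,dx_j\wedge dx_l$ and reads off the symmetry condition. Your added dimension count ($\dim L=k+(n-k)=n$) and the explicit bookkeeping of which differentials vanish on $L$ are correct details that the paper leaves implicit.
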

\begin{proof}
The restriction of $\omega$ to $L$ is given by
\begin{equation*}
\omega|_L=\sum_{j>l}\Bigg(\frac{\partial \xi_j}{\partial x_l}-\frac{\partial \xi_l}{\partial x_j}\Bigg)dx_j\wedge dx_l.
\end{equation*}
\end{proof}
Restricting the projection map $\rho$ to $L$ gives a $T^{n-k}$-fibration $\rho_L:L\to S$. Denote by $L_x:=\rho_L^{-1}(x)$ the fiber of $\rho_L$ over a point $(x_1,\ldots,x_k,0,\ldots,0)\in S$.

We would like to equip $L$ with a flat $U(1)$-connection. Consider
\begin{equation*}
\nabla:=d+2\pi\sqrt{-1}\left(\sum_{j=1}^ka_jdx_j+\sum_{l=k+1}^nb_ld\xi_l\right),
\end{equation*}
where $a_j=a_j(x_1,\ldots,x_k)$, $j=1,\ldots,k$, are $C^\infty$ functions on $S$ and $b_l\in\bR$, $l=k+1,\ldots,n$, are constants.
\begin{prop}\label{prop2.2}
The connection $\nabla$ over $L$ is flat if and only if
\begin{equation*}
\frac{\partial a_j}{\partial x_l}=\frac{\partial a_l}{\partial x_j} \textrm{ for $j,l=1,\ldots,k$.}
\end{equation*}
\end{prop}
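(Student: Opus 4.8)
The plan is to reduce flatness of the $U(1)$-connection $\nabla$ to the vanishing of its curvature two-form, and then to compute that curvature directly, in exactly the same spirit as the proof of Proposition \ref{prop2.1}. Writing $\nabla=d+2\pi\sqrt{-1}\,A$ with connection one-form $A=\sum_{j=1}^k a_j\,dx_j+\sum_{l=k+1}^n b_l\,d\xi_l$, the key preliminary remark is that $\nabla$ is abelian, so its curvature is $F=2\pi\sqrt{-1}\,dA$, the quadratic term $A\wedge A$ vanishing because $A$ is a scalar-valued one-form. Hence flatness of $\nabla$ is literally the closedness condition $dA=0$ on $L$.

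First I would record the correct coordinates on $L$: parametrizing $S$ by $(x_1,\ldots,x_k)$ and the torus fiber $L_x$ by $(\xi_{k+1},\ldots,\xi_n)$, these $n$ functions serve as coordinates on $L$, while $x_{k+1},\ldots,x_n$ are the constants $c_{k+1},\ldots,c_n$ and $\xi_1,\ldots,\xi_k$ are the functions $\xi_j(x_1,\ldots,x_k)$ (which do not even appear in $A$). This identification is what ensures that the constancy of the $b_l$ and the restricted $x$-dependence of the $a_j$ are used correctly.

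Next I would differentiate term by term. Since each $b_l$ is constant, the fiber part $\sum_l b_l\,d\xi_l$ of $A$ is closed and contributes nothing; since each $a_j$ depends only on $x_1,\ldots,x_k$, one has $da_j=\sum_{l=1}^k(\partial a_j/\partial x_l)\,dx_l$, so that
\begin{equation*}
dA=\sum_{j=1}^k da_j\wedge dx_j=\sum_{j,l=1}^k\frac{\partial a_j}{\partial x_l}\,dx_l\wedge dx_j=\sum_{l<j}\left(\frac{\partial a_j}{\partial x_l}-\frac{\partial a_l}{\partial x_j}\right)dx_l\wedge dx_j.
\end{equation*}
Because the two-forms $dx_l\wedge dx_j$ with $l<j\le k$ are linearly independent on $L$, this expression vanishes if and only if every coefficient vanishes, i.e. $\partial a_j/\partial x_l=\partial a_l/\partial x_j$ for all $j,l=1,\ldots,k$, which is precisely the asserted condition.

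I do not anticipate any genuine obstacle: the argument is a one-line exterior derivative and mirrors the proof of Proposition \ref{prop2.1} verbatim, with $a_j$ in place of $\xi_j$. The only two points deserving a moment's attention are the observation that abelian-ness reduces flatness to the closedness of $A$, and the bookkeeping of which differentials survive on $L$, both of which are elementary.
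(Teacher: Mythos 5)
Your proof is correct and follows the same route as the paper: the paper's proof consists precisely of the one-line curvature computation $\nabla^2=2\pi\sqrt{-1}\sum_{j>l}\bigl(\partial a_j/\partial x_l-\partial a_l/\partial x_j\bigr)dx_l\wedge dx_j$, which is exactly your $dA$ calculation. Your additional remarks on the abelian reduction $F=2\pi\sqrt{-1}\,dA$ and the coordinate bookkeeping on $L$ are accurate but merely make explicit what the paper leaves implicit.
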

\begin{proof}
The curvature of $\nabla$ is given by
\begin{equation*}
\nabla^2=2\pi\sqrt{-1}\sum_{j>l}\Bigg(\frac{\partial a_j}{\partial x_l}-\frac{\partial a_l}{\partial x_j}\Bigg)dx_l\wedge dx_j.
\end{equation*}
\end{proof}

The SYZ transformation of the A-brane $(L,\nabla)$ on $\check{X}(B)$ should be given by a B-brane $(C,\check{\nabla})$ on $X(B)$ where $C\subset X(B)$ is a complex submanifold and $\check{\nabla}$ is a $U(1)$-connection over $C$.

We start with the construction of $C$. Recall that a point $(x,y)\in X(B)$ defines a flat $U(1)$-connection on the trivial line bundle $\underline{\bC}$ over the torus fiber $\rho^{-1}(x)$. More precisely, $y=(y_1,\ldots,y_n)\in\check{\rho}^{-1}(x)$ corresponds to the connection
\begin{equation*}
\nabla_y:=d+2\pi\sqrt{-1}\sum_{j=1}^ny_jd\xi_j
\end{equation*}
over $\rho^{-1}(x)$. We define $C\subset X(B)$ to be the set of points $(x,y)\in X(B)$ such that $x\in S$ and the connection $\nabla$ twisted by $\nabla_y$ is trivial when it is restricted to $L_x$. In terms of coordinates, this means
\begin{equation*}
\Bigg(\nabla+2\pi\sqrt{-1}\sum_{j=1}^ny_jd\xi_j\Bigg)\Bigg|_{L_x}=d+2\pi\sqrt{-1}\Bigg(\sum_{j=k+1}^n(b_j+y_j)d\xi_j\Bigg)
\end{equation*}
is trivial, i.e. $y_j=-b_j$ for $j=k+1,\ldots,n$. Hence $C$ is the complex submanifold $C\subset X(B)$ defined by the equations
\begin{equation*}
z_j=A_j\textrm{ for $j=k+1,\ldots,n$},
\end{equation*}
where $A_j=\exp2\pi(c_j-\sqrt{-1}b_j)\in\bC^\times$.

Restricting the projection map $\check{\rho}$ to $C$ gives a $T^k$-fibration $\check{\rho}_C:C\to S$, which is exactly the fiberwise dualization of $\rho_L:L\to S$. Indeed, for $x\in S$, we have an exact sequence
\begin{equation*}
0\longrightarrow T_xS\longrightarrow T_xB\longrightarrow N_xS\longrightarrow0,
\end{equation*}
where $N_xS$ denotes the fiber at $x$ of the normal bundle to $S$. Taking the dual gives
\begin{equation*}
0\longrightarrow N^*_xS\longrightarrow T^*_xB\longrightarrow T^*_xS\longrightarrow0.
\end{equation*}
The fiber of the map $\rho_L$ over $x\in S$ is given by the torus
\begin{equation*}
L_x=\rho_L^{-1}(x)=N^*_xS/(N^*_xS\cap\Lambda^\vee_x),
\end{equation*}
whose dual is given by
\begin{equation*}
T_xS/(T_xS\cap\Lambda_x),
\end{equation*}
which is precisely the fiber $C_x:=\check{\rho}_C^{-1}(x)$ of $\check{\rho}_C$ over $x\in S$.

It remains to construct the connection $\check{\nabla}$. This is done by reversing the construction of $C$. Namely, we define $\check{\nabla}$ to be a connection of the form
\begin{equation*}
\check{\nabla}=d+2\pi\sqrt{-1}\Bigg(\sum_{j=1}^k a_jdx_j+\sum_{j=1}^k\beta_jdy_j\Bigg),
\end{equation*}
where $a_j=a_j(x_1,\ldots,x_k)$, $j=1,\ldots,k$, are the functions appearing in $\nabla$ while $\beta_j=\beta_j(x_1,\ldots,x_k)$, $j=1,\ldots,k$ are some other $C^\infty$ functions on $S$, such that $\check{\nabla}$ twisted by $\nabla_\xi$ is trivial when it is restricted to $C_x$ for any $(x,\xi)\in L$. Here, $\nabla_\xi$ is the flat $U(1)$-connection
\begin{equation*}
\nabla_\xi:=d-2\pi\sqrt{-1}\sum_{j=1}^n\xi_jdy_j
\end{equation*}
over $\check{\rho}^{-1}(x)$ corresponding to a point $(x,\xi)\in\check{X}(B)$. So we are requiring the connection
\begin{equation*}
\Bigg(\check{\nabla}-2\pi\sqrt{-1}\sum_{j=1}^n\xi_jdy_j\Bigg)\Bigg|_{C_x}=d+2\pi\sqrt{-1}\Bigg(\sum_{j=1}^k(\beta_j-\xi_j)dy)_j\Bigg)
\end{equation*}
to be trivial for any $x\in S$. In other words, we must have $\beta_j=\xi_j$ for $j=1,\ldots,k$. Hence the connection $\check{\nabla}$ is given by
\begin{equation*}
\check{\nabla}=d+2\pi\sqrt{-1}\Bigg(\sum_{j=1}^k a_jdx_j+\sum_{j=1}^k\xi_jdy_j\Bigg),
\end{equation*}
where $\xi_j=\xi_j(x_1,\ldots,x_k)$, $j=1,\ldots,k$, are the functions defining $L\subset\check{X}(B)$. Notice that we do not transform the base directions.

The $(0,2)$-part $F^{(0,2)}$ of the curvature two-form $F$ of $\check{\nabla}$ is given by
\begin{eqnarray*}
F^{(0,2)} & = & -\frac{\sqrt{-\pi}}{2}\left[\sum_{j>l}\Bigg(\frac{\partial a_j}{\partial x_l}-\frac{\partial a_l}{\partial x_j}\Bigg)\frac{d\bar{z}_j}{\bar{z}_j}\wedge\frac{d\bar{z}_l}{\bar{z}_l}\right]\\
&  & \quad\qquad+\frac{\pi}{2}\left[\sum_{j>l}
\Bigg(\frac{\partial\xi_j}{\partial x_l}-\frac{\partial\xi_l}{\partial x_j}\Bigg)
\frac{d\bar{z}_j}{\bar{z}_j}\wedge\frac{d\bar{z}_l}{\bar{z}_l}\right].
\end{eqnarray*}
Since $L$ is a Lagrangian submanifold and the connection $\nabla$ is flat, we have the following
\begin{prop}
The $(0,2)$-part $F^{(0,2)}$ of the curvature two-form $F$ of $\check{\nabla}$ is trivial, so the $U(1)$-connection $\check{\nabla}$ defines a holomorphic line bundle $\mathscr{L}$ over $C$.
\end{prop}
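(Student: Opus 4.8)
The plan is to compute the curvature $F=\check{\nabla}^2$ directly from the explicit expression for $\check{\nabla}$, isolate its $(0,2)$-component with respect to the complex structure on $X(B)$, and then invoke the two preceding propositions to see that this component vanishes.

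First I would write $F=\check{\nabla}^2=2\pi\sqrt{-1}\,d\bigl(\sum_{j=1}^k a_j\,dx_j+\sum_{j=1}^k\xi_j\,dy_j\bigr)$. Since both $a_j$ and $\xi_j$ depend only on the coordinates $x_1,\ldots,x_k$ along $S$, taking the exterior derivative produces only terms of the form $\frac{\partial a_j}{\partial x_l}\,dx_l\wedge dx_j$ and $\frac{\partial\xi_j}{\partial x_l}\,dx_l\wedge dy_j$; after antisymmetrizing in $j$ and $l$, the curvature is expressed entirely through the two ``curl'' quantities $\frac{\partial a_j}{\partial x_l}-\frac{\partial a_l}{\partial x_j}$ and $\frac{\partial\xi_j}{\partial x_l}-\frac{\partial\xi_l}{\partial x_j}$.

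Next I would pass to the complex coordinates $z_j=\exp 2\pi(x_j+\sqrt{-1}y_j)$ on $X(B)$. Using $\frac{dz_j}{z_j}=2\pi(dx_j+\sqrt{-1}\,dy_j)$ and its conjugate, I can solve for $dx_j$ and $dy_j$, substitute them into the curvature, and collect precisely the terms proportional to $\frac{d\bar{z}_j}{\bar{z}_j}\wedge\frac{d\bar{z}_l}{\bar{z}_l}$. This bookkeeping is exactly what yields the displayed formula for $F^{(0,2)}$ in the excerpt. This substitution is the only computationally delicate step, since one must track carefully which pairings of holomorphic and antiholomorphic one-forms contribute to type $(0,2)$; I expect this to be the main (though still mild) obstacle, as everything on either side of it is formal.

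Finally, the conclusion is immediate. The coefficient of each antiholomorphic two-form $\frac{d\bar{z}_j}{\bar{z}_j}\wedge\frac{d\bar{z}_l}{\bar{z}_l}$ in $F^{(0,2)}$ is a linear combination of $\frac{\partial a_j}{\partial x_l}-\frac{\partial a_l}{\partial x_j}$ and $\frac{\partial\xi_j}{\partial x_l}-\frac{\partial\xi_l}{\partial x_j}$. By Proposition \ref{prop2.2}, the flatness of $\nabla$ forces the first quantity to vanish for all $j,l\le k$, and by Proposition \ref{prop2.1}, the Lagrangian condition on $L$ forces the second to vanish. Hence $F^{(0,2)}=0$, so $F$ is of type $(1,1)$. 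By the Koszul--Malgrange integrability theorem, a $U(1)$-connection whose curvature has trivial $(0,2)$-part endows the underlying $C^\infty$ complex line bundle with a unique compatible holomorphic structure, producing the desired holomorphic line bundle $\mathscr{L}$ over $C$.
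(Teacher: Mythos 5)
Your proposal is correct and follows essentially the same route as the paper: the displayed formula for $F^{(0,2)}$ preceding the proposition already expresses it through the two curl quantities $\frac{\partial a_j}{\partial x_l}-\frac{\partial a_l}{\partial x_j}$ and $\frac{\partial \xi_j}{\partial x_l}-\frac{\partial \xi_l}{\partial x_j}$, and the paper's proof is just the one-line observation that these vanish by Propositions \ref{prop2.1} and \ref{prop2.2}. You simply spell out the coordinate change to $z_j$ and add the (standard, implicitly assumed) Koszul--Malgrange step at the end, both of which are consistent with what the paper does.
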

\begin{proof}
This follows from Propositions \ref{prop2.1} and \ref{prop2.2}.
\end{proof}
It is easy to see that the triviality of $F^{(0,2)}$ is in fact equivalent to requiring $L$ to be Lagrangian and $\nabla$ to be flat. And in this case, the $(2,0)$-part $F^{(2,0)}$ of the curvature two-form is also trivial, so $F$ is the Chern connection of a certain hermitian metric on $\mathscr{L}$.
\begin{defn}\label{def_SYZ_transform}
We define the SYZ transformation of the A-brane $(L,\nabla)$ on $\check{X}(B)$ to be the B-brane $(C,\check{\nabla})$ on $X(B)$, i.e.
\begin{equation*}
\mathcal{F}(L,\nabla):=(C,\check{\nabla}).
\end{equation*}
We also define the SYZ transformation of the isomorphism class ($L$ up to Hamiltonian isotopy and $\nabla$ up to gauge equivalence) of $(L,\nabla)$ to be $(C,\mathscr{L})$.
\end{defn}

\section{Constructing mirrors by SYZ}\label{Mirror}

Consider the affine hypersurface
\begin{equation*}
Y=\{(u,v,z)\in\bC^2\times\bC^\times|uv=f(z)\},
\end{equation*}
where $f(z)\in\bC[z]$ is a polynomial of degree $n+1$ with distinct zeros. We equip $Y$ with the K\"ahler structure
\begin{equation*}
\omega=-\frac{\sqrt{-1}}{2}\Bigg(du\wedge d\bar{u}+dv\wedge d\bar{v}+\frac{dz\wedge d\bar{z}}{|z|^2}\Bigg)\Bigg|_Y.
\end{equation*}

In this section, we will construct the instanton-corrected mirror of $(Y,\omega)$ by carrying out the SYZ proposal \cite{SYZ96}. This procedure is well-known among experts, and was first carried out by Auroux \cite{A07,A09} and later generalized by Chan-Lau-Leung \cite{CLL11} (see also Chan \cite{C10} and Lau-Leung-Wu \cite{LLW10}).

We start by constructing a Lagrangian torus fibration on $Y$, following Goldstein \cite{Go01} and Gross \cite{G01}. Consider the following Hamiltonian $S^1$-action on $Y$.
\begin{equation*}
e^{2\pi\sqrt{-1}t}\cdot(u,v,z)=(e^{2\pi\sqrt{-1}t}u,e^{-2\pi\sqrt{-1}t}v,z).
\end{equation*}
The associated moment map $\mu:Y\to\bR$ is given by
\begin{equation*}
\mu(u,v,z)=\frac{1}{2}(|u|^2-|v|^2).
\end{equation*}

For $\lambda\in\mu(Y)=\bR$, let
\begin{equation*}
Y_\lambda:=\mu^{-1}(\lambda)/S^1
\end{equation*}
be the reduced space. $Y_\lambda$ is smooth and diffeomorphic to $\bC^\times$ for any $\lambda\in\bR$. The reduced symplectic structure on $Y_\lambda$ is given by (cf. \cite[Section 4]{G01}):
\begin{eqnarray*}
\omega_\lambda & = & -\frac{\sqrt{-1}}{2}\Bigg(\frac{df\wedge d\bar{f}}{2\sqrt{\lambda^2+|f|^2}}+\frac{dz\wedge d\bar{z}}{|z|^2}\Bigg)\\
& = & -\frac{\sqrt{-1}}{2}\Bigg(\frac{|f'|^2}{2\sqrt{\lambda^2+|f|^2}}+\frac{1}{|z|^2}\Bigg)dz\wedge d\bar{z}.
\end{eqnarray*}

The reduced symplectic manifold $(Y_\lambda,\omega_\lambda)$ is symplectomorphic to $(\bC^\times,\omega_0)$, where $\omega_0=-\frac{\sqrt{-1}}{2}\frac{dz\wedge d\bar{z}}{|z|^2}$ is the standard K\"ahler structure on $\bC^\times$. Pulling back the log map
\begin{equation*}
\textrm{Log}:\bC^\times\to\bR,\ z\mapsto\log|z|
\end{equation*}
by the symplectomorphism gives a Lagrangian $S^1$-fibration on $Y_\lambda$. Then by taking the preimages of the fibers in $\mu^{-1}(\lambda)$, we obtain a Lagrangian torus fibration $\rho:Y\to\bR^2$ on $Y$ defined by
\begin{equation*}
\rho(u,v,z)=\Bigg(\log|z|,\frac{1}{2}(|u|^2-|v|^2)\Bigg).
\end{equation*}
The fact that this is a Lagrangian fibration can also be checked by direct computations. Denote by $T_{s,\lambda}$ the Lagrangian torus fiber of $\rho$ in $Y$ over the point $(s,\lambda)\in\bR^2$.

The map $\rho$ is not regular precisely at the fixed points of the $S^1$-action on $Y$: $\{(u,v,z)\in\bC^2\times\bC^\times|u=0,v=0,f(z)=0\}$. Let $\Delta:=\{a_0,a_1,\ldots,a_n\}\subset\bC^\times$ be the set of zeros of $f(z)$. Then a fiber $T_{s,\lambda}$ is singular (with nodal singularities) if and only if $\lambda=0$ and $s=s_i:=\log|a_i|$ for some $i=0,1,\ldots,n$. We assume that $s_0<s_1<\ldots<s_n$ so that each singular fiber of $\rho$ contains only one nodal singular point.

The instanton corrections are contributions from holomorphic disks bounded by the Lagrangian torus fibers of $\rho$. By the maximum principle, the image of any nonconstant holomorphic map $\varphi:D^2\to Y$ from the unit disk $D^2$ to $Y$ with boundary on a torus fiber of $\rho$ must be contained in a fiber of the projection map
\begin{equation*}
\pi:Y\to\bC^\times,\ (u,v,z)\mapsto z,
\end{equation*}
and the fiber must be singular because a smooth fiber of $\pi$, which is a smooth conic in $\bC^2$, does not contain any nonconstant holomorphic disk. Therefore, a Lagrangian torus fiber $T_{s,\lambda}$, which is a family of circles over the circle $\{z\in\bC^\times||z|=e^s\}\subset\bC^\times$, bounds a holomorphic disk if and only if $s=s_i$ for some $i=0,1,\ldots,n$.

We summarize our discussion by the following proposition.
\begin{prop}\label{SYZ_fibration}
Let $B:=\bR^2$. The map $\rho:Y\to B$ defined by
\begin{equation*}
\rho(u,v,z)=\Bigg(\log|z|,\frac{1}{2}(|u|^2-|v|^2)\Bigg)
\end{equation*}
is a Lagrangian torus fibration on $Y$. The discriminant locus of the fibration is given by the finite set
\begin{equation*}
\Gamma:=\{(s,\lambda)\in B|\lambda=0,\ s=s_i\textrm{ for some $i=0,1,\ldots,n$}\}.
\end{equation*}
Let $B^{sm}:=B\setminus\Gamma$. The fiber $T_{s,\lambda}$ over a point $(s,\lambda)\in\Gamma$ is singular with one nodal singularity, and the fiber $T_{s,\lambda}$ over a point $(s,\lambda)\in B^{sm}$ is a smooth Lagrangian torus in $Y$. The locus of Lagrangian torus fibers which bound nonconstant holomorphic disks is given by the union of vertical lines:
\begin{equation*}
H:=\{(s,\lambda)\in B|s=s_i\textrm{ for some $i=0,1,\ldots,n$}\}.
\end{equation*}
We call $H$ the \emph{wall} in $B$.
\end{prop}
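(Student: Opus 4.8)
The plan is to assemble the proposition from the symplectic-reduction picture set up above, treating the four assertions (Lagrangian, discriminant, nodal fibers, wall) in turn. First I would record the general fact underlying the construction: if $\bar L\subset Y_\lambda$ is Lagrangian in the reduced space, then its preimage under the quotient $\pi_\lambda:\mu^{-1}(\lambda)\to Y_\lambda$ is Lagrangian in $Y$. Writing $i_\lambda:\mu^{-1}(\lambda)\hookrightarrow Y$ for the inclusion, the defining property of the reduced form gives $i_\lambda^*\omega=\pi_\lambda^*\omega_\lambda$, so $\omega$ restricts to $\pi_\lambda^*(\omega_\lambda|_{\bar L})=0$ on $\pi_\lambda^{-1}(\bar L)$; since $\dim\pi_\lambda^{-1}(\bar L)=\dim\bar L+1=\tfrac12\dim_\bR Y$, this preimage is Lagrangian. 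As $(Y_\lambda,\omega_\lambda)$ is a symplectic surface, every fiber circle $\{|z|=e^s\}$ of the Log map is automatically Lagrangian there, so its preimage $T_{s,\lambda}$ — an $S^1$-bundle over a circle, necessarily trivial and hence a $2$-torus — is a Lagrangian torus in $Y$. This yields the Lagrangian torus fibration $\rho=(\log|z|,\mu)$.

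Next I would identify the discriminant. The $S^1$-action is free off its fixed locus $\{u=v=0\}$, which on $Y$ forces $f(z)=0$, i.e.\ $z\in\Delta$; the corresponding fixed points $(0,0,a_i)$ map under $\rho$ to $(s_i,0)$. Over any $(s,\lambda)\notin\Gamma$ the circle $\{|z|=e^s\}$ either avoids $\Delta$ or is kept off the fixed points by $\lambda\ne0$, so $\mu^{-1}(\lambda)$ is a free $S^1$-manifold there and $T_{s,\lambda}$ is a smooth Lagrangian torus; thus the singular fibers lie exactly over $\Gamma$, and the hypothesis $s_0<\cdots<s_n$ ensures each such fiber contains a single fixed point.

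To pin down the singular fibers I would pass to a local normal form. Since the zeros of $f$ are distinct, $f'(a_i)\ne0$, so $w:=f(z)$ is a holomorphic coordinate near $z=a_i$ and $Y$ is locally the conic bundle $\{uv=w\}$, on which $\rho$ acquires a focus–focus singularity at the fixed point $(0,0,a_i)$. Concretely, $T_{s_i,0}$ is the intersection of $\pi^{-1}(\{|z|=e^{s_i}\})$ with $\mu^{-1}(0)$, and over the single point $z=a_i$ the conic $uv=f(a_i)=0$ degenerates to the pair of lines $\{u=0\}\cup\{v=0\}$; tracing this through the reduction shows the torus is pinched to a point precisely at the node $(0,0,a_i)$, yielding an immersed sphere with one transverse self-intersection. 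This local analysis is the part I expect to be the main technical obstacle, the remaining assertions being formal consequences of reduction and the maximum principle.

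Finally, for the wall I would combine the maximum principle with an analysis at the node. If $\varphi:D^2\to Y$ is a nonconstant holomorphic disk with $\partial\varphi\subset T_{s,\lambda}$, then $z\circ\varphi$ is holomorphic with $|z\circ\varphi|=e^s$ on $\partial D^2$; applying the maximum principle together with the minimum principle to $1/(z\circ\varphi)$ (legitimate since $z$ is nowhere zero on $Y$) forces $z\circ\varphi$ to be constant, so $\varphi$ lands in a single conic fiber $\pi^{-1}(z_0)$ with $|z_0|=e^s$. When $z_0\notin\Delta$ this conic is smooth, biholomorphic to $\bC^\times$, and the same argument in its coordinate shows it bounds no nonconstant disk along an $S^1$-orbit; hence a disk forces $\{|z|=e^s\}\cap\Delta\ne\emptyset$, i.e.\ $s=s_i$ for some $i$. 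Conversely, over $z=a_i$ the fiber circle lies in one line $\{v=0\}$ or $\{u=0\}$ of the nodal conic, according to the sign of $\lambda$, and bounds the evident holomorphic disk in that $\bC$-factor; so every $T_{s_i,\lambda}$ does bound a disk, and the disk locus is exactly the union of vertical lines $H$.
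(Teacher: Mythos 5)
Your proposal is correct and follows essentially the same route as the paper, which does not give a formal proof but derives the proposition from the preceding discussion: symplectic reduction plus the Log map for the Lagrangian fibration, identification of the singular fibers with the $S^1$-fixed points $(0,0,a_i)$, and the maximum principle forcing any holomorphic disk into a (necessarily singular) conic fiber of $\pi$. You merely supply details the paper leaves implicit, such as the minimum-principle argument applied to $1/(z\circ\varphi)$, the local model $uv=w$ near a node, and the explicit disks in the degenerate conics.
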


The SYZ proposal \cite{SYZ96} suggests that the mirror should be given by the moduli space of Lagrangian torus fibers equipped with flat $U(1)$-connections. More precisely, let $\check{Y}_0$ be the moduli space of pairs $(T_{s,\lambda},\nabla)$ where $(s,\lambda)\in B^{sm}$ and $\nabla$ is a flat $U(1)$-connection (up to gauge equivalence) on the trivial line bundle $\underline{\bC}:=\bC\times T_{s,\lambda}$ over the smooth Lagrangian torus $T_{s,\lambda}$. We call $\check{Y}_0$ the semi-flat mirror of $(Y,\omega)$.

The Lagrangian torus fibration $\rho:Y\to B$ induces an integral affine structure on $B^{sm}$. This is usually called the \textit{symplectic affine structure}. Recall that the affine structure being integral means that the transition maps on $B^{sm}$ lie inside the integral affine linear group $\textrm{Aff}(\bZ^2):=\textrm{GL}_2(\bZ)\rtimes\bZ^2$. As in Section \ref{SYZ_transform}, denote by $\Lambda\subset TB^{sm}$ the family of lattices locally generated by $\partial/\partial x_1,\partial/\partial x_2$, where $x_1,x_2$ are local affine coordinates on $B^{sm}$. $\Lambda$ is well-defined because of integrality of the affine structure. Then $\check{Y}_0$ is topologically the quotient $TB^{sm}/\Lambda$ of $TB^{sm}$ by $\Lambda$, and hence admits a naturally defined complex structure, where the local complex coordinates are given by exponentiations of complexifications of local affine coordinates on $B^{sm}$.

When there are no singular fibers, the complex structure on $\check{Y}_0$ is globally defined and it gives the genuine mirror manifold. However, in our example, the complex structure is not globally defined on $\check{Y}_0$ due to nontrivial monodromy of the affine structure around each point $(s_i,0)$ in the discriminant locus $\Gamma$. This can be described more explicitly as follows. For $i=0,1,\ldots,n$, consider the strip $B_i:=(s_{i-1},s_{i+1})\times\bR$ (where we set $s_{-1}:=-\infty$ and $s_{n+1}:=+\infty$). A covering of $B^{sm}$ is given by the open sets
\begin{equation*}
U_i:=B_i\setminus[s_i,s_{i+1})\times\{0\},\ V_i:=B_i\setminus(s_{i-1},s_{i}]\times\{0\}.
\end{equation*}
$i=0,1,\ldots,n$. The intersection $U_i\cap V_i$ consists of two components: $U_i\cap V_i=B_i^+\cup B_i^-$ where $B_i^+$ (resp. $B_i^-$) corresponds to the component in which $\lambda>0$ (resp. $\lambda<0$).

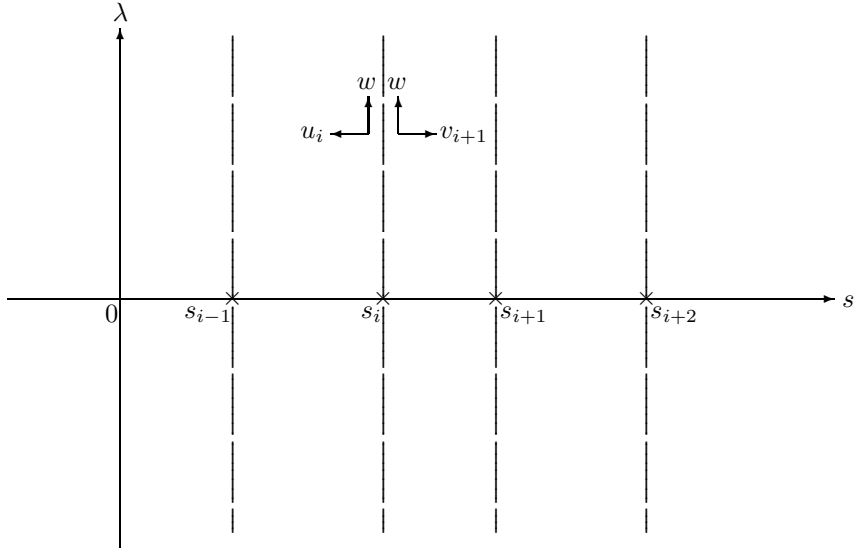
\begin{figure}[ht]
\setlength{\unitlength}{1mm}
\begin{picture}(100,75)
\put(10,2){\vector(0,1){69}} \put(9,72){$\lambda$} \put(-5,35){\vector(1,0){110}} \put(106,34){$s$} \put(8,32){$0$}
\curve(25,70, 25,62) \curve(25,61, 25,53) \curve(25,52, 25,44) \curve(25,43, 25,35) \curve(25,34, 25,26) \curve(25,25, 25,17) \curve(25,16, 25,8) \curve(25,7, 25,4) \put(23.6,34.2){$\times$} \put(18.5,32.5){$s_{i-1}$}
\curve(45,70, 45,62) \curve(45,61, 45,53) \curve(45,52, 45,44) \curve(45,43, 45,35) \curve(45,34, 45,26) \curve(45,25, 45,17) \curve(45,16, 45,8) \curve(45,7, 45,4) \put(43.6,34.2){$\times$} \put(42,32.5){$s_i$}
\curve(60,70, 60,62) \curve(60,61, 60,53) \curve(60,52, 60,44) \curve(60,43, 60,35) \curve(60,34, 60,26) \curve(60,25, 60,17) \curve(60,16, 60,8) \curve(60,7, 60,4) \put(58.6,34.2){$\times$} \put(60.5,32.5){$s_{i+1}$}
\curve(80,70, 80,62) \curve(80,61, 80,53) \curve(80,52, 80,44) \curve(80,43, 80,35) \curve(80,34, 80,26) \curve(80,25, 80,17) \curve(80,16, 80,8) \curve(80,7, 80,4) \put(78.6,34.2){$\times$} \put(80.5,32.5){$s_{i+2}$}
\put(43,57){\vector(0,1){5}} \put(41.5,63){$w$} \put(43,57){\vector(-1,0){5}} \put(34,56.5){$u_i$}
\put(47,57){\vector(0,1){5}} \put(45.5,63){$w$} \put(47,57){\vector(1,0){5}} \put(52.5,56.5){$v_{i+1}$}
\end{picture}
\caption{The base affine manifold $B$.}\label{base_B}
\end{figure}

On $U_i$ (resp. $V_i$), denote by $u_i$ (resp. $v_{i+1}$) the exponentiation of the complexification\footnote{Our convention is: For a real number $x\in\bR$, its complexification is given by $-x+\sqrt{-1}y$.} of the left-pointing (resp. right-pointing) affine coordinate. Also, denote by $w$ the exponentiation of the complexification of the upward-pointing affine coordinate. See Figure \ref{base_B}.

The coordinate $w$ is globally defined. Indeed if we regard $\check{Y}_0$ as the moduli space of pairs $(T_{s,\lambda},\nabla)$ and denote by $\alpha\in\pi_2(Y,T_{s_i,\lambda})$ the class of a holomorphic disk in $Y$ bounded by $T_{s_i,\lambda}$ for some $i=0,1,\ldots,n$, then the function $w$ can be written as
\begin{equation*}
w(T_{s,\lambda},\nabla)=\left\{\begin{array}{ll}
                        \exp(-\int_\alpha\omega)\textrm{hol}_\nabla(\partial\alpha) & \textrm{ for $\lambda>0$},\\
                        \exp(-\int_{-\alpha}\omega)\textrm{hol}_\nabla(\partial(-\alpha)) & \textrm{ for $\lambda<0$},
                        \end{array}\right.
\end{equation*}
where $\textrm{hol}_\nabla(\partial\alpha)$ is the holonomy of the flat $U(1)$-connection $\nabla$ around the boundary $\partial\alpha\in\pi_1(T_{s_i,\lambda})$, so that $|w|=e^{-\lambda}$ and $\int_\alpha\omega=|\lambda|$. Note that the class $\alpha$ changes to $-\alpha$ when one moves from positive $\lambda$ to negative $\lambda$ and $\partial(-\alpha)=-\partial\alpha\in\pi_1(T_{s_i,\lambda})$ (cf. Auroux \cite{A07,A09}).

Since the monodromy of the affine structure going counter-clockwise around $(s_i,0)\in\Gamma$ is given by the matrix
\begin{equation*}
\left(\begin{array}{ll}
        1 & 1\\
        0 & 1
      \end{array}\right),
\end{equation*}
we glue the coordinates $(u_i,w)$ and $(v_{i+1},w)$ on $U_i\cap V_i$ accordingly by
\begin{eqnarray*}
u_i=v_{i+1}^{-1} & \textrm{ on $B_i^+$,}\\
u_i=v_{i+1}^{-1}w & \textrm{ on $B_i^-$.}
\end{eqnarray*}
Hence, the monodromy of the complex coordinates going counter-clockwise around the point $(s_i,0)$ is given by
\begin{equation*}
u_i\mapsto u_iw,\ v_{i+1}^{-1}\mapsto v_{i+1}^{-1}w.
\end{equation*}
In particular, the complex coordinates on the components $TB_i/\Lambda$ do not glue together to form a globally defined complex structure on $\check{Y}_0$. Notice that the gluing of the coordinates $(v_{i+1},w)$ and $(u_{i+1},w)$ is given by $v_{i+1}^{-1}=u_{i+1}$ in the intersection $V_i\cap U_{i+1}=(s_i,s_{i+1})\times\bR$, and this does not give rise to nontrivial monodromy.

Now we need to modify the gluing of the complex charts on $\check{Y}_0$ by incorporating instanton corrections. Following the construction in \cite{A07,A09,C10,CLL11,LLW10} (which in turn are special cases of the general construction in \cite{KS04,GS07}), the gluing should be modified as:
\begin{eqnarray}\label{wc+}
u_i=v_{i+1}^{-1}+v_{i+1}^{-1}w =& v_{i+1}^{-1}(1+w) & \textrm{ on $B_i^+$,}\\
u_i=v_{i+1}^{-1}w+v_{i+1}^{-1} =& v_{i+1}^{-1}w(1+w^{-1}) & \textrm{ on $B_i^-$.}\label{wc-}
\end{eqnarray}

Geometrically, the term $v_{i+1}^{-1}w$ in the first formula (\ref{wc+}) should be regarded as multiplying $v_{i+1}^{-1}$ by $w$ where $w$ corresponds to the unique nonconstant holomorphic disk bounded by the Lagrangian torus $T_{0,\lambda}$ whose area is given by
$$\lambda=-\log|w|>0.$$
This means that we are correcting the term $v_{i+1}^{-1}$ by adding the contribution from the holomorphic disk bounded by $T_{0,\lambda}$ when we cross the upper half of the wall $\{s_i\}\times\bR\subset H$. Similarly, the term $v_{i+1}^{-1}$ in the second formula (\ref{wc-}) should be read as multiplying $v_{i+1}^{-1}w$ by $w^{-1}$ where $w^{-1}$ corresponds to the unique nonconstant holomorphic disk bounded by the Lagrangian torus $T_{0,\lambda}$ whose area is given by
$$-\lambda=\log|w|=-\log|w^{-1}|>0.$$
So we are correcting $v_{i+1}^{-1}w$ again by the disk bounded by $T_{0,\lambda}$ when we cross the lower half of the wall $\{s_i\}\times\bR\subset H$.

Alternatively, one may partially compactify $Y$ by allowing $z\in\bC^\times$ to be zero and replacing $\rho$ by the map
$$(u,v,z)\mapsto(|z|,(|u|^2-|v|^2)/2).$$
Then the base becomes an affine manifold with both singularities and boundary; more precisely, it is given by a right half-space $\bar{B}$ in $\bR^2$ and the boundary $\partial\bar{B}$ corresponds to the divisor in $Y$ defined by $z=0$. In this setting, the formulas (\ref{wc+}), (\ref{wc-}) can be viewed as wall-crossing formulas for the counting of Maslov index two holomorphic disks in $Y$ bounded by Lagrangian torus fibers and each of the terms $u_i$, $v_{i+1}^{-1}$ and $v_{i+1}^{-1}w$ can be expressed as
\begin{equation*}
\exp\Bigg(-\int_\beta\omega\Bigg)\textrm{hol}_\nabla(\partial\beta),
\end{equation*}
for a suitable relative homotopy class $\beta\in\pi_2(Y,T_{s,\lambda})$ of Maslov index two (cf. Auroux \cite{A07,A09}).

In any case, the key consequence of this modification of gluing is that it cancels the monodromy and defines global complex coordinates $u_i,v_i,w$ on $TB_i/\Lambda$ (topologically, $TB_i/\Lambda\cong(\bC^\times)^2$) related by
\begin{equation}\label{gluing}
u_i=v_i^{-1},\ u_{i+1}=v_{i+1}^{-1}\textrm{ and }u_iv_{i+1}=1+w.
\end{equation}
The instanton-corrected mirror $\check{Y}$ is obtained by gluing together $TB_i/\Lambda$, $i=0,1,\ldots,n$, according to (\ref{gluing}) and analytic continuation.\footnote{By our definition, the mirror manifold $\check{Y}$ will have ``gaps" because for instance $u,v,w$ are $\bC^\times$-valued. A natural way to ``complete" the mirror manifold and fill out its gaps is by rescaling the symplectic structure of $Y$ and analytic continuation. See \cite[Section 4.2]{A07} for a discussion of this ``renormalization" process.} On the intersection $(TB_{i-1}/\Lambda)\cap(TB_i/\Lambda)$, we have
\begin{equation*}
u_i=v_i^{-1},\ u_iv_{i+1}=1+w=u_{i-1}v_i.
\end{equation*}

We observe that the above equations give precisely the gluing of complex charts in the toric resolution of the $A_n$-singularity. Let us recall the construction of the $A_n$-resolution as a toric surface. Let $N=\bZ^2$ and $M:=\textrm{Hom}(N,\bZ)\cong\bZ^2$. Let $N_R:=N\otimes_\bZ R$ and $M_R:=M\otimes_\bZ R$ for any $\bZ$-module $R$ and denote by $\langle\cdot,\cdot\rangle:M_R\times N_R\to R$ the natural pairing. Consider the 2-dimensional fan $\Sigma\in N_\bR=\bR^2$ generated by $\{a_i:=(i,1)\in N|i=0,1,\ldots,n+1\}$. The toric surface $X_\Sigma$ defined by this fan is Calabi-Yau (i.e. the canonical line bundle $K_{X_\Sigma}$ is trivial) and it gives a crepant resolution
\begin{equation*}
g:X_\Sigma\to\bC^2/\bZ_{n+1}
\end{equation*}
of the $A_n$-singularity $\bC^2/\bZ_{n+1}$. For $i=0,1,\ldots,n$, let $\sigma_i\in\Sigma$ be the cone generated by $a_i$ and $a_{i+1}$. To each cone $\sigma_i$ we associate an affine toric variety $X_{\sigma_i}$.

A lattice point $b\in M$ corresponds to a function $\chi^b$ on $N_{\bC^\times}$; in particular, the generators $b_{i+1}:=(-1,i+1),b_i:=(1,-i)\in M$ of the dual cone $\sigma_i^\vee$ gives coordinates on the affine toric variety $X_{\sigma_i}$:
\begin{equation*}
u_i:=\chi^{b_i},\ v_{i+1}:=\chi^{b_{i+1}}.
\end{equation*}
The toric variety $X_\Sigma$ is obtained by gluing the affine toric varieties $X_{\sigma_i}$ according to:
\begin{equation*}
u_i=v_i^{-1}, u_iv_{i+1}=\chi^{(0,1)}=u_{i+1}v_i.
\end{equation*}
This is precisely the gluing of complex charts in the mirror manifold $\check{Y}$. However, in $\check{Y}$, the function $u_iv_{i+1}=1+w$ is never equal to 1 since $w$ is $\bC^\times$-valued. This corresponds to removing the hypersurface $D=\{h:=\chi^{(0,1)}=1\}$ in $X_\Sigma$.
\begin{thm}
By carrying out the procedure outlined in \cite[Section 2.3]{CLL11}, we obtain the complex manifold
\begin{equation*}
\check{Y}:=X_\Sigma\setminus D,
\end{equation*}
where $D$ is the hypersurface in $X_\Sigma$ defined by $h=1$. We call $\check{Y}$ the \emph{instanton-corrected mirror} of $(Y,\omega)$.
\end{thm}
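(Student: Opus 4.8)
The plan is to apply the general SYZ-with-corrections procedure of \cite[Section 2.3]{CLL11} to the Lagrangian torus fibration $\rho:Y\to B$ of Proposition \ref{SYZ_fibration}, and then to identify the resulting instanton-corrected complex manifold, chart by chart, with the toric complement $X_\Sigma\setminus D$. Since the fibration, its discriminant locus $\Gamma$, the wall $H$, and the semi-flat complex coordinates $u_i,v_{i+1},w$ on each strip $B_i$ have already been described explicitly, the proof reduces to two steps: (i) verifying that the instanton corrections produce exactly the corrected gluing (\ref{gluing}); and (ii) matching this gluing with the toric gluing of the $A_n$-resolution, thereby pinning down the removed locus $D$.

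For step (i), the geometric input is the counting of Maslov-index-two holomorphic disks bounded by the torus fibers. By the maximum-principle argument preceding Proposition \ref{SYZ_fibration}, such disks occur only over the wall $H=\{s=s_i\}$, and each fiber there bounds a single family whose boundary holonomy and symplectic area assemble into the globally defined coordinate $w$; this is exactly the situation treated in \cite{A07,A09,CLL11}, from which the wall-crossing formulas (\ref{wc+}) and (\ref{wc-}) follow, the correction factors being $1+w$ across the upper half and $1+w^{-1}$ across the lower half of $\{s_i\}\times\bR$. The crucial verification is that these factors cancel the affine monodromy: the uncorrected coordinates have monodromy $u_i\mapsto u_iw$ about $(s_i,0)$, and I would check directly that after inserting the $1+w$ factors the functions $u_i,v_i,w$ become single-valued on each $TB_i/\Lambda$ and satisfy $u_i=v_i^{-1}$ together with $u_iv_{i+1}=1+w$, i.e. (\ref{gluing}).

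For step (ii), I would compare (\ref{gluing}) with the toric gluing $u_i=v_i^{-1}$, $u_iv_{i+1}=\chi^{(0,1)}=u_{i+1}v_i$ of the affine charts $X_{\sigma_i}\cong\bC^2$ of $X_\Sigma$ recalled above. Under the identification $u_i=\chi^{b_i}$, $v_{i+1}=\chi^{b_{i+1}}$ and $h=\chi^{(0,1)}=1+w$, the two gluings coincide, so the corrected mirror is the locus of $X_\Sigma$ on which $w=h-1\in\bC^\times$, that is, on which $h\ne1$. Conversely every point of $X_\Sigma\setminus D$ has $h\ne1$ and so determines a valid $w=h-1\in\bC^\times$; hence the image is precisely $X_\Sigma\setminus D$.

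The hard part will be the renormalization, or ``gap-filling,'' step flagged in the footnote. As constructed, the coordinates $u_i,v_{i+1},w$ take values in $\bC^\times$, so the naive mirror is built from $(\bC^\times)^2$-charts rather than the affine charts $X_{\sigma_i}\cong\bC^2$. The loci that must be filled in are $\{u_i=0\}$ and $\{v_{i+1}=0\}$, precisely the toric-invariant curves of $X_\Sigma$ (along which $h=u_iv_{i+1}=0$, so $w=-1\in\bC^\times$ and the constraint $w\in\bC^\times$ remains satisfied). Following \cite[Section 4.2]{A07}, I would rescale the symplectic form on $Y$ and analytically continue the complex structure to adjoin these loci, and then verify that the continuations on adjacent strips agree on each overlap $(TB_{i-1}/\Lambda)\cap(TB_i/\Lambda)$ under (\ref{gluing}). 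Granting this compatibility, the chart-wise identification of step (ii) globalizes, and the completed manifold is exactly $X_\Sigma\setminus D$, as claimed.
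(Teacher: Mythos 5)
Your proposal is correct and follows essentially the same route as the paper, which proves this theorem not by a separate argument but by the discussion preceding it: the explicit monodromy computation, the wall-crossing corrections (\ref{wc+})--(\ref{wc-}) borrowed from \cite{A07,A09,CLL11} yielding the gluing (\ref{gluing}), and the observation that (\ref{gluing}) is exactly the toric gluing of the charts $X_{\sigma_i}$ with the locus $h=1$ removed. Your only addition is a more explicit treatment of the gap-filling step, which the paper relegates to a footnote citing \cite[Section 4.2]{A07}; your identification of the missing loci and the compatibility check you propose are consistent with that reference.
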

For $i=1,\ldots,n$, denote by $S_i$ the interval $(s_{i-1},s_i)\times\{0\}\subset B$. Then (the closure of) the quotient $TS_i/TS_i\cap\Lambda$ in $\check{Y}$ is one of the $n$ exceptional divisors (each is a $(-2)$-curve) in $X_\Sigma$. We denote this exceptional divisor by $E_i\subset\check{Y}$.

\section{HMS for $A_n$-resolutions}\label{HMS}

In this section, we introduce a class of Lagrangian submanifolds which are Lefschetz thimbles in $(Y,\omega)$ following Khovanov-Seidel \cite{KS02}.\footnote{The construction of these Lagrangian submanifolds was originally due to Donaldson \cite{D00}, and has since been extensively used in e.g. \cite{KS02,S01,S08,P11}.} We then compute the SYZ transformations of these Lagrangian submanifolds and show that this defines a triangulated equivalence between a derived Fukaya category of $(Y,\omega)$ and a full triangulated subcategory of the derived category of coherent sheaves on the mirror $\check{Y}$.

Consider the projection map
\begin{equation*}
\pi:Y\to\bC^\times,\ (u,v,z)\mapsto z.
\end{equation*}
Each fiber is a conic in $\bC^2$ which becomes singular when $z$ is a zero of $f(z)$. Denote by $\Delta=\{a_0,a_1,\ldots,a_n\}\subset\bC^\times$ the set of zeros of $f(z)$. For any path $\gamma:[0,1]\to\bC^\times$ which is either an embedding or a loop (i.e. $\gamma(0)=\gamma(1)$), let
\begin{equation*}
L_\gamma:=\{(u,v,\gamma(t)):|u|=|v|,\ t\in[0,1]\}.
\end{equation*}
\begin{lem}
$L_\gamma$ is Lagrangian.
\end{lem}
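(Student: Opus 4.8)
The plan is to verify directly that $\omega$ pulls back to zero on $L_\gamma$ and then conclude by a dimension count, exploiting the Hamiltonian $S^1$-action introduced in Section~\ref{Mirror}. First I would record two structural observations. The defining condition $|u|=|v|$ says exactly that $L_\gamma\subset\mu^{-1}(0)$, where $\mu(u,v,z)=\tfrac12(|u|^2-|v|^2)$; and $L_\gamma$ is invariant under the $S^1$-action $e^{2\pi\sqrt{-1}t}\cdot(u,v,z)=(e^{2\pi\sqrt{-1}t}u,e^{-2\pi\sqrt{-1}t}v,z)$, which fixes $z$ and preserves $|u|=|v|$. For $z=\gamma(t)$ with $f(\gamma(t))\neq0$ the fiber $\{|u|=|v|,\ uv=f(\gamma(t))\}$ is a single $S^1$-orbit, a circle of radius $|f(\gamma(t))|^{1/2}$, so $L_\gamma$ is a circle bundle over $\gamma$ and $\dim_\bR L_\gamma=2=\tfrac12\dim_\bR Y$, the correct dimension for a Lagrangian.

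Next I would parametrize the smooth locus of $L_\gamma$ by $(t,\theta)$ via $u=|f(\gamma(t))|^{1/2}e^{\sqrt{-1}\theta}$, $z=\gamma(t)$, $v=f(\gamma(t))/u$, where $f(\gamma(t))\neq0$. Then $\partial_\theta u=\sqrt{-1}u$, $\partial_\theta v=-\sqrt{-1}v$, $\partial_\theta z=0$, so $\partial_\theta$ is precisely the infinitesimal generator of the $S^1$-action. A short calculation using $d(|u|^2)=\bar u\,du+u\,d\bar u$ (and the analogue for $v$) gives the moment-map identity $\iota_{\partial_\theta}\omega=d\mu$; note the $dz\wedge d\bar z/|z|^2$ term contributes nothing since $\partial_\theta z=0$. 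The crux is then the one-line computation
$$\omega|_{L_\gamma}(\partial_\theta,\partial_t)=(\iota_{\partial_\theta}\omega)(\partial_t)=d\mu(\partial_t)=\partial_t\big(\mu|_{L_\gamma}\big)=0,$$
since $\mu$ vanishes identically on $L_\gamma\subset\mu^{-1}(0)$. As $\omega(\partial_t,\partial_t)=\omega(\partial_\theta,\partial_\theta)=0$ trivially, this shows $\omega|_{L_\gamma}=0$, so $L_\gamma$ is isotropic; with the dimension count it is Lagrangian. Equivalently, one may phrase this through symplectic reduction: $L_\gamma$ is the full $S^1$-preimage of the curve $\gamma$ inside the reduced surface $Y_0=\mu^{-1}(0)/S^1\cong\bC^\times$, and any real curve in a symplectic surface is automatically isotropic, hence Lagrangian after taking the preimage.

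The only point requiring care, and what I expect to be the main (though minor) obstacle, is the behaviour where $\gamma$ meets a zero $a_i\in\Delta$ of $f$: there the circle fiber collapses to the nodal fixed point $(0,0,a_i)$ and the $(t,\theta)$-parametrization degenerates, so a priori $L_\gamma$ need not be smooth. When $\gamma$ is an embedding or a loop avoiding $\Delta$, $L_\gamma$ is a smooth annulus or torus and the argument above is complete. When an endpoint of $\gamma$ lies on $\Delta$ (the matching-path case producing a Lagrangian sphere), I would treat the collapse point by continuity: the identity $\omega|_{L_\gamma}=0$ holds on the open dense smooth locus and extends across $a_i$ to the closure, and one verifies in the standard local model of a vanishing cycle that $L_\gamma$ is a smoothly embedded Lagrangian sphere through $a_i$.
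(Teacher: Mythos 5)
Your proof is correct. The paper disposes of this lemma in one sentence, offering two routes: a direct computation, or the observation that $L_\gamma$ is swept out by parallel transport of the vanishing circles $\{|u|=|v|\}$ in the fibers of the Lefschetz fibration $\pi:Y\to\bC^\times$ (so that $L_\gamma$ is a standard matching-path/thimble Lagrangian). Your argument is a structured version of the first route, but organized through the Hamiltonian $S^1$-action: you observe $L_\gamma\subset\mu^{-1}(0)$, that $L_\gamma$ is $S^1$-invariant, and then kill the only nontrivial pairing via $\omega(\partial_\theta,\partial_t)=d\mu(\partial_t)=0$; equivalently, $L_\gamma$ is the reduction preimage of the curve $\gamma$ in $Y_0\cong\bC^\times$, and curves in symplectic surfaces are automatically Lagrangian. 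This is clean and self-contained, and it has the virtue of explaining structurally \emph{why} the computation works (it is exactly the mechanism by which $\rho^{-1}$ of curves in $\{\lambda=0\}$ produces Lagrangians, consistent with the fibration $\rho$ of Proposition \ref{SYZ_fibration}). The paper's parallel-transport phrasing buys something different: it identifies $L_\gamma$ as the Lefschetz thimble over $\gamma$, which is the description actually used in Section \ref{HMS} (admissible paths, vanishing cycles, the Khovanov--Seidel setup). Your final paragraph on the degeneration at the endpoints $a_i\in\Delta$ addresses a point the paper glosses over, and your treatment (closedness of the isotropy condition plus the local model for the vanishing-cycle collapse) is the right one.
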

This can be checked by direct computations or by observing that $L_\gamma$ is invariant under parallel transport with respect to the horizontal distribution (given by symplectic orthogonal to the fiber) induced by the symplectic fibration $\pi:Y\to\bC^\times$.

When $\gamma_s$ is the circle $\{\exp(s+2\pi\sqrt{-1}t)\in\bC^\times|t\in[0,1]\}$ in $\bC^\times$ with radius $e^s$ centered at the origin, $L_{\gamma_s}$ is nothing but the fiber $T_{s,0}$ of the fibration $\rho:Y\to B$ over the point $(s,0)\in B$. Suppose that $s_{i-1}<s<s_i$ for some $i=1,\ldots,n$. Then $L_{\gamma_s}=T_{s,0}$ is a smooth Lagrangian torus in $Y$. Let $\nabla$ be a flat $U(1)$-connection on the trivial line bundle $\underline{\bC}$ over $L_{\gamma_s}$. Then the SYZ transformation of $(L_{\gamma_s},\nabla)$ is given by the skyscraper sheaf $\mathcal{O}_p$ over a point $p$ in the exceptional divisor $E_i\subset\check{Y}$.

On the other hand, let $\gamma:[0,1]\to\bC^\times$ be an embedded path such that $\gamma(0),\gamma(1)\in\Delta$ and $\gamma(t)\not\in\Delta$ for $0<t<1$. We call such a path \textit{admissible} (\cite{KS02}; see also \cite{IUU10}). In this case, $L_\gamma$ is the Lefschetz thimble, i.e. the family of vanishing cycles over $\gamma$. Since $L_\gamma$ is diffeomorphic to a sphere $S^2$, it is an exact Lagrangian submanifold (i.e. when we write $\omega=d\alpha$ for some 1-form $\alpha$, the restriction $\alpha|_{L_\gamma}$ is exact). For two isotopic admissible paths $\gamma$ and $\gamma'$, the Lagrangians $L_\gamma$ and $L_{\gamma'}$ are Hamiltonian isotopic \cite[Section 5.3]{IUU10}.
\begin{defn}
Given $i\in\{1,\ldots,n\}$ and an admissible path $\gamma:[0,1]\to\bC^\times$ going from $a_{i-1}$ to $a_i$. We call $\gamma$ strongly admissible if
\begin{equation*}
|\textrm{Im}(\gamma)\cap\{z\in\bC^\times||z|=e^s\}|=1
\end{equation*}
for any $s_{i-1}\leq s\leq s_i$.
\end{defn}
In other words, an admissible path $\gamma$ from $a_{i-1}$ to $a_i$ is strongly admissible if it intersects each circle in the annulus $\{z\in\bC^\times|e^{s_{i-1}}\leq|z|\leq e^{s_i}\}$ centered at the origin at one point.

Given such a path $\gamma$, the Lagrangian submanifold $L_\gamma$ intersects the Lagrangian torus fiber $T_{s,0}$ over each point in the interval $S_i=(s_{i-1},s_i)\times\{0\}\subset B$ in a circle. So $L_\gamma^\circ:=L_\gamma\setminus\{(0,0,a_{i-1}),(0,0,a_i)\}$ is of the type we consider in Section \ref{SYZ_transform}, i.e. $L_\gamma^\circ$ is a translate of the conormal bundle of $S_i\subset B$. We equip $L_\gamma$ with the flat $U(1)$-connection $\nabla_0=d+\frac{\sqrt{-1}}{2}du$.\footnote{Since $L_\gamma$ is a sphere, $\nabla_0$ is in fact gauge-equivalent to the trivial connection $d$.}

According to Definition \ref{def_SYZ_transform}, the SYZ transformation of $(L_\gamma,\nabla_0)$ is given by a B-brane $(C,\check{\nabla})$ where $C$ is a complex submanifold in $\check{Y}$ and $\check{\nabla}$ is a $U(1)$-connection on a holomorphic line bundle $\mathscr{L}$ over $C$. The main goal of this section is to compute $C$ and $\mathscr{L}$.

Consider the strip $W_i:=(s_{i-1},s_i)\times\bR$ in $B$. Using the notations in Section \ref{Mirror}, we have $W_i=V_{i-1}\cap U_i$. Let
\begin{equation*}
\Theta_i:T^*W_i/T^*W_i\cap\Lambda^\vee\to\rho^{-1}(W_i)
\end{equation*}
be a fiber-preserving symplectomorphism, so that
\begin{equation*}
\Theta_i^*(\omega)=dx_1\wedge d\xi_1+dx_2\wedge d\xi_2,
\end{equation*}
where $(x_1,x_2)\in W_i$ are the (symplectic) affine coordinates on $W_i$ and $(\xi_1,\xi_2)$ are fiber coordinates on $T^*W_i$. One of the affine coordinates, say $x_2$, on $W_i$ is given by the original coordinate $-\lambda$ (where $\lambda$ is the moment map of the $S^1$-action on $Y$). In terms of the dual coordinates $(x_1,x_2,y_1,y_2)$ on $TW_i/TW_i\cap\Lambda\subset\check{Y}$, we have
\begin{eqnarray*}
v_i & = & \exp2\pi(x_1+\sqrt{-1}y_1),\\
u_i & = & \exp2\pi(-x_1-\sqrt{-1}y_1),\\
w & = & \exp2\pi(x_2+\sqrt{-1}y_2).
\end{eqnarray*}
Since $L_\gamma^\circ$ is a translate of the conormal bundle of $S_i=\{(x_1,x_2)\in W_i|x_2=0\}\subset W_i$, it follows from the constructions in Section \ref{SYZ_transform} that $C$ is defined by $w=\exp2\pi(0-\frac{\sqrt{-1}}{2})=-1$, or $1+w=0$. Hence, the closure $C$ of the submanifold mirror to $(L_\gamma^\circ,\nabla_0)$ is precisely the exceptional divisor $E_i\cong\bP^1$ in $\check{Y}$.

To compute the connection $\check{\nabla}$ mirror to $(L_\gamma^\circ,\nabla_0)$ and hence determine the holomorphic structure of $\mathscr{L}$, consider the Lagrangian submanifold $\tilde{L}_0\subset T^*W_i/T^*W_i\cap\Lambda^\vee$ defined by $x_2=0$ and $\xi_1=0$. The closure of the image $\Theta_i(\tilde{L}_0)$ is a Lagrangian sphere $L_{\gamma_0}$ which corresponds to a strongly admissible path $\gamma_0:[0,1]\to\bC^\times$ from $a_{i-1}$ to $a_i$. Fix a lift $\tilde{\gamma}_0:[0,1]\to\bR$ of the map $\gamma_0/|\gamma_0|:[0,1]\to S^1$. For any other admissible path $\gamma:[0,1]\to\bC^\times$ from $a_{i-1}$ to $a_i$ such that $\gamma(t)\not\in\Delta$ for $0<t<1$, there exists a unique lift $\tilde{\gamma}:[0,1]\to\bR$ of $\gamma/|\gamma|:[0,1]\to S^1$ such that $\tilde{\gamma}(0)=\tilde{\gamma}_0(0)$.
\begin{defn}
The winding number of $\gamma$ relative to $\gamma_0$ is the integer
\begin{equation*}
w(\gamma)=\tilde{\gamma}(1)-\tilde{\gamma}_0(1)\in\bZ.
\end{equation*}
\end{defn}
Note that the integer $w(\gamma)$ depends only on the isotopic class of $\gamma$. In fact, define a loop $\gamma\cup\bar\gamma_0:[0,1]\to\bC^\times$ by
\begin{equation*}
\gamma\cup\bar\gamma_0(t)=\left\{\begin{array}{ll}
                               \gamma(t) & \textrm{for $0\leq t\leq1$;}\\
                               \gamma_0(2-t) & \textrm{for $1\leq t\leq2$.}
                               \end{array}\right.
\end{equation*}
Then the class of $\gamma\cup\bar\gamma_0$ in $\pi_1(\bC^\times)\cong\bZ$ is precisely given by the integer $w(\gamma)$. See Figure \ref{rel_wind_num} below.

\begin{figure}[htp]
\begin{center}
\includegraphics[scale=0.8]{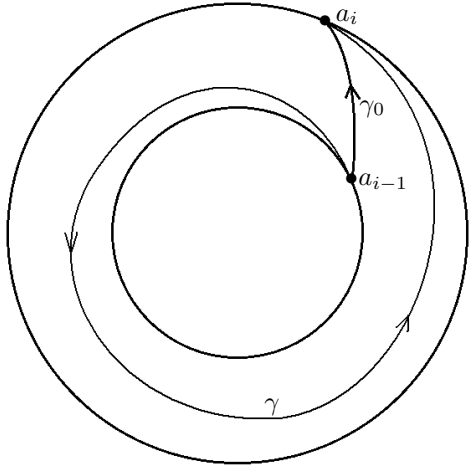}
\end{center}
\begin{picture}(100,-1)
\put(96,148){$\gamma_0$} \put(60,35){$\gamma$}
\put(95.5,120){$a_{i-1}$} \put(90.5,120){$\bullet$}
\put(87,182.5){$a_i$} \put(80.5,180){$\bullet$}
\end{picture}
\caption{The path $\gamma_0$ and a strongly admissible path $\gamma$ with winding number $w(\gamma)=1$ relative to $\gamma_0$ in the plane $\bC^\times$.}\label{rel_wind_num}
\end{figure}

Now the pullback of $L_\gamma$ under $\Theta_i$ can be written as $\tilde{L}_\gamma=\{(x_1(s),0,\xi_1(s),\xi_2):s\in(s_{i-1},s_i)\}$ for some functions $x_1(s)$ and $\xi_1(s)$ of $s\in(s_{i-1},s_i)$. Since (a lift of) the function $\xi_1(s)$ is bounded, we can extend it to $[s_{i-1},s_i]$.
\begin{lem}
$\xi_1(s_i)-\xi_1(s_{i-1})=w(\gamma)$.
\end{lem}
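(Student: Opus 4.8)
The plan is to identify the cotangent-fiber coordinate $\xi_1$ geometrically and then reduce the claim to the winding-number description of the loop $\gamma\cup\bar\gamma_0$ recorded just after the definition of $w(\gamma)$. First I would pin down the meaning of the two fiber directions under $\Theta_i$. Since $x_2=-\lambda$ and $\xi_2$ is conjugate to $x_2$, the coordinate $\xi_2$ is the angle of the Hamiltonian $S^1$-action, whose orbit circle is exactly the vanishing cycle $\{|u|=|v|\}$ of the conic fibration $\pi$; this is why $\tilde L_\gamma$ is swept out in the $\xi_2$-direction at a fixed value $\xi_1=\xi_1(s)$. The remaining affine coordinate $x_1$ is a normalization of $s=\log|z|$ coming from the $\mathrm{Log}$-fibration on the reduced space $Y_\lambda\cong(\bC^\times,\omega_0)$, so its conjugate $\xi_1$ is the angular coordinate $\tfrac{1}{2\pi}\arg z$ of that circle fiber. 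The precise statement I want is that, on the annulus $\{e^{s_{i-1}}\le|z|\le e^{s_i}\}$, the $\bR/\bZ$-valued function $\xi_1$ restricts to a degree-one map on each circle $\{|z|=e^s\}$; equivalently $d\xi_1$ and $\tfrac{1}{2\pi}d(\arg z)$ represent the same class in $H^1$ of the annulus, so their integrals over any loop agree (this robustness to fiberwise reparametrization is what lets me avoid computing $\Theta_i$ explicitly).

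Next I would fix the normalization constant: by construction $\gamma_0$ is the image under $\Theta_i$ of $\{x_2=0,\ \xi_1=0\}$, so $\xi_1\equiv0$ along $\gamma_0$, which is precisely the choice of lift implicit in the definition of $w(\gamma)$ (the lifts of $\gamma$ and $\gamma_0$ agreeing at $t=0$). Then I would run the degree computation. Along $\gamma$ the Lagrangian $\tilde L_\gamma$ lies over $x_2=0$ with $\xi_1=\xi_1(s)$, so $\int_\gamma d\xi_1=\xi_1(s_i)-\xi_1(s_{i-1})$, while $\int_{\gamma_0}d\xi_1=0$. Hence
\begin{equation*}
\oint_{\gamma\cup\bar\gamma_0}d\xi_1=\int_\gamma d\xi_1-\int_{\gamma_0}d\xi_1=\xi_1(s_i)-\xi_1(s_{i-1}).
\end{equation*}
By the identification above, $\oint_{\gamma\cup\bar\gamma_0}d\xi_1=\tfrac{1}{2\pi}\oint_{\gamma\cup\bar\gamma_0}d(\arg z)$, which is the class of $\gamma\cup\bar\gamma_0$ in $\pi_1(\bC^\times)\cong\bZ$; by the remark following the definition of $w(\gamma)$ this class equals $w(\gamma)$. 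Combining the two displays yields $\xi_1(s_i)-\xi_1(s_{i-1})=w(\gamma)$.

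The main obstacle I anticipate is rigorously establishing the degree-one identification of the first paragraph: one must verify that the fiber-preserving symplectomorphism $\Theta_i$, together with the induced integral affine structure, really sends $\xi_1$ to the once-winding $\arg z$ coordinate of the reduced $\bC^\times$ (and not to some combination involving the $S^1$-orbit direction), with the lattice $\Lambda^\vee$ normalizing the angular period to $1$. This is where the explicit description of the reduction $(Y_\lambda,\omega_\lambda)\cong(\bC^\times,\omega_0)$ and of the action-angle coordinates of $\rho$ from Section \ref{Mirror} must be invoked. A secondary point to check is that strong admissibility makes $|\gamma(t)|$ monotone, so that $s=\log|\gamma(t)|$ is a genuine reparametrization and $\xi_1(s)$ is well defined as a function of $s$ and extends continuously to the endpoints where the $\xi_2$-circle collapses.
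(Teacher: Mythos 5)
Your proof is correct and rests on the same key fact as the paper's own argument: under $\Theta_i$ the fiber coordinate $\xi_1$ is the argument of $z$ normalized so that one winding of $z$ around the origin increases $\xi_1$ by one, together with the normalization $\xi_1\equiv 0$ along $\gamma_0$. The only difference is presentational --- you integrate $d\xi_1$ over the concatenated loop $\gamma\cup\bar\gamma_0$ and identify the result with its class in $\pi_1(\bC^\times)$, whereas the paper first invokes isotopy invariance of $w(\gamma)$ to deform $\gamma$ to the concatenation of $\gamma_0$ with a $k$-fold loop around a fiber circle and then reads off the endpoint difference directly.
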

\begin{proof}
Since $w(\gamma)$ depends only on the isotopic class of $\gamma$, we can deform $\gamma$ to $\gamma'$ which is the concatenation of $\gamma_0$ with a loop winding around the image of the circle $\gamma_{s_i}$ for, say, $k$ times. Then $w(\gamma)=k$.

Restricting the symplectomorphism $\Theta_i$ to the preimage of $S_i:=(s_{i-1},s_i)\times\{0\}\subset W_i$. Under this restriction, deforming $\gamma$ corresponds to deforming the graph of $\xi_1(s)$ fixing the endpoints $\xi_1(s_{i-1})$ and $\xi(s_i)$. Along $\gamma_0$, the coordinate $\xi_1(s)$ is always equal to $\xi_1(s_{i-1})$ (which is equal to 0). Along the loop winding around $\gamma_{s_i}$, the difference $\xi_1(s_i)-\xi_1(s_{i-1})$ is precisely $k$ because winding once around the circle $\gamma_s$ corresponds to increasing $\xi_1(s)$ by one in the Lagrangian torus fiber $T_{s,0}$.

The result follows.
\end{proof}
Now the connection $\check{\nabla}$ mirror to $(L_\gamma^\circ,\nabla_0)$ can be extended over $C$ and is given by
\begin{equation*}
\check{\nabla}=d+2\pi\sqrt{-1}\xi_1(s)dy_1
\end{equation*}
with curvature two-form
\begin{equation*}
F=2\pi\sqrt{-1}d\xi_1(s)\wedge dy_1.
\end{equation*}
Hence, the degree of the holomorphic line bundle $\mathscr{L}\to C$ defined by $\check{\nabla}$ is
\begin{eqnarray*}
c_1(\mathscr{L})=\int_C\frac{\sqrt{-1}}{2\pi}F=-\int_{s=s_{i-1}}^{s=s_i}d\xi_1(s)=-(\xi_1(s_i)-\xi_1(s_{i-1}))=-w(\gamma).
\end{eqnarray*}
Summarizing these results, we have the
\begin{thm}\label{main_thm}
Given a strongly admissible path $\gamma:[0,1]\to\bC^\times$ from $a_{i-1}$ to $a_i$, the SYZ transformation of the A-brane $(L_\gamma,\nabla_0)$ on $Y$ is the B-brane $(C,\mathscr{L})$ on $\check{Y}$ where $C\cong\bP^1$ is the exceptional divisor $E_i$ and $\mathscr{L}=\mathcal{O}_{E_i}(-w(\gamma))$, i.e. we have
\begin{equation*}
\mathcal{F}(L_\gamma,\nabla_0)=(E_i,\mathcal{O}_{E_i}(-w(\gamma))).
\end{equation*}
\end{thm}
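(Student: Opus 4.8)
The plan is to assemble the statement from the SYZ transformation machinery of Section~\ref{SYZ_transform} together with the local computations carried out just above the theorem. The transform $\mathcal{F}(L_\gamma,\nabla_0)$ is obtained by applying Definition~\ref{def_SYZ_transform} to the A-brane $(L_\gamma^\circ,\nabla_0)$, so there are two separate tasks: pin down the underlying complex cycle $C$, and then identify the holomorphic line bundle $\mathscr{L}$ it carries. Since $L_\gamma$ is a sphere, the two critical points of $\pi|_{L_\gamma}$ may be excised to produce $L_\gamma^\circ$, which is exactly a translate of the conormal bundle of $S_i$ of the type treated in Propositions~\ref{prop2.1}--\ref{prop2.2}; the transform is computed there and then extended across the two punctures.

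First I would identify $C$. Working in the chart $\Theta_i\colon T^*W_i/T^*W_i\cap\Lambda^\vee\to\rho^{-1}(W_i)$ over the strip $W_i=(s_{i-1},s_i)\times\bR$, the Lagrangian $L_\gamma^\circ$ is supported over $S_i=\{x_2=0\}$, so the construction of Section~\ref{SYZ_transform} forces the transverse complex coordinate to be a constant. The precise value of that constant is read off from the chosen connection $\nabla_0=d+\tfrac{\sqrt{-1}}{2}\,du$: it yields $w=\exp 2\pi(0-\tfrac{\sqrt{-1}}{2})=-1$, that is, $1+w=0$. This is exactly the defining equation of the exceptional $(-2)$-curve, so the closure of $C$ is the divisor $E_i\cong\bP^1$ in $\check{Y}$, giving the first assertion.

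Next I would determine $\mathscr{L}$. Because holomorphic line bundles on $\bP^1$ are classified by their degree, it suffices to compute $c_1(\mathscr{L})$. Transforming $\nabla_0$ as in Section~\ref{SYZ_transform} produces the connection $\check{\nabla}=d+2\pi\sqrt{-1}\,\xi_1(s)\,dy_1$ on $\mathscr{L}\to C$, whose curvature is $F=2\pi\sqrt{-1}\,d\xi_1(s)\wedge dy_1$. Integrating $\tfrac{\sqrt{-1}}{2\pi}F$ over $C$ and carrying out the fiber integral in the $dy_1$ direction reduces everything to $-\int_{s_{i-1}}^{s_i}d\xi_1(s)=-(\xi_1(s_i)-\xi_1(s_{i-1}))$. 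The decisive input is the preceding Lemma, which equates this endpoint jump with the relative winding number $w(\gamma)$; this yields $c_1(\mathscr{L})=-w(\gamma)$, and hence $\mathscr{L}\cong\mathcal{O}_{E_i}(-w(\gamma))$, completing the second assertion.

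The main obstacle is concentrated in the Lemma rather than in the final assembly: one must translate the purely topological datum $w(\gamma)$---the class of $\gamma\cup\bar\gamma_0$ in $\pi_1(\bC^\times)$---into the boundary behavior of the conormal coordinate function $\xi_1(s)$, using that winding once around $\gamma_s$ shifts $\xi_1$ by one in the torus fiber. A secondary point that must be checked is that passing from $C^\circ=E_i\setminus\{\text{two points}\}$ to its compactification $E_i$ contributes nothing extra to the curvature integral, so that the degree computed over the open stratum genuinely equals $\deg\mathscr{L}$ on $\bP^1$; this is what legitimizes reading off the isomorphism type of $\mathscr{L}$ directly from $c_1$.
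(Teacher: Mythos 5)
Your proposal follows essentially the same route as the paper: identify $C$ as the locus $1+w=0$ (i.e.\ $E_i$) from the value of $\nabla_0$ via the Section~\ref{SYZ_transform} machinery, then compute $c_1(\mathscr{L})=-\int d\xi_1=-w(\gamma)$ using the Lemma equating the endpoint jump of $\xi_1$ with the relative winding number. Your explicit flagging of the extension across the two punctures is a point the paper passes over silently, but it does not change the argument.
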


For each $i=1,\ldots,n$, we choose a strongly admissible path $\gamma_i:[0,1]\to\bC^\times$ going from $a_{i-1}$ to $a_i$ such that $w(\gamma_i)=1$. Denote by $L_i$ the Lagrangian sphere $L_{\gamma_i}\subset Y$. By the above theorem, the SYZ transformation of $(L_i,\nabla_0)$ is given by $\mathcal{E}_i:=\mathcal(L_i,\nabla_0)=(E_i,\mathcal{O}_{E_i}(-1))$.

We have
\begin{equation*}
|L_i\cap L_j|=\left\{\begin{array}{ll}
                        2 & \textrm{if $i=j$},\\
                        1 & \textrm{if $j=i+1$ or $j=i-1$},\\
                        0 & \textrm{otherwise}
                     \end{array}\right.
\end{equation*}
Equip $Y$ with the grading given by the 2nd tensor power of the holomorphic volume form
\begin{equation*}
\Omega=\frac{dv\wedge dz}{vz}.
\end{equation*}
Then there exists a suitable grading (in the sense of Kontsevich \cite{K94} and Seidel \cite{S00}) for each $L_i$ so that the $n$ graded Lagrangian spheres $\{\tilde{L}_i\}_{i=1}^n$ in $Y$ form an $A_n$-configuration of spherical objects \cite{S99,ST01} in the derived Fukaya category $D^b\textrm{Fuk}_0(Y,\omega)$ generated by $\{\tilde{L}_i\}_{i=1}^n$. This means that the Floer cohomology groups are given by
\begin{equation*}
\textrm{hom}_{\textrm{Fuk}_0(Y,\omega)}(\tilde{L}_i,\tilde{L}_j)=\left\{\begin{array}{ll}
                                        \bC\cdot r_i\oplus\bC\cdot s_i & \textrm{if $i=j$},\\
                                        \bC\cdot p_i & \textrm{if $j=i+1$},\\
                                        \bC\cdot q_i & \textrm{if $j=i-1$},\\
                                        0 & \textrm{otherwise},
                                     \end{array}\right.
\end{equation*}
where $r_i,s_i,p_i,q_i$ are intersection points such that $\textrm{deg}(r_i)=0$, $\textrm{deg}(s_i)=2$ and $\textrm{deg}(p_i)=\textrm{deg}(q_i)=1$.

Moreover, by the maximum principle, the Lagrangian submanifold does not bound any nonconstant holomorphic disks in $Y$. Hence, the $A_\infty$-operations are all trivial except for $\mathfrak{m}_2$, which can also be computed explicitly. See \cite[Section 5.3]{IUU10} for more details.

Now, let $E=E_1\cup E_2\cup\ldots\cup E_n$ be the union of exceptional divisors in $\check{Y}$. Consider the bounded derived category $D^b_E(\check{Y})$ of coherent sheaves on $\check{Y}$ supported at $E$. Let $D^b_0(\check{Y})$ be the full triangulated subcategory of $D^b_E(\check{Y})$ consisting of objects $\mathscr{E}$ such that $\bR g_*\mathscr{E}=0$. Then $D^b_0(\check{Y})$ is generated by $\mathcal{E}_1,\ldots,\mathcal{E}_n$. These generators also form an $A_n$-configuration of spherical objects in $D^b_0(\check{Y})$ \cite{ST01}.

Combining with the above discussion, we have the equality between morphism spaces:
\begin{equation*}
\textrm{hom}_{\textrm{Fuk}_0(Y,\omega)}(\tilde{L}_i,\tilde{L}_j)\cong\textrm{hom}_{D^b_0(\check{Y})}(\mathcal{E}_i,\mathcal{E}_j).
\end{equation*}
We conclude that
\begin{cor}\label{main_cor}
The SYZ transformation $\mathcal{F}$ induces an equivalence of triangulated categories
\begin{equation*}
\mathcal{F}:D^b\textrm{Fuk}_0(Y,\omega)\overset{\cong}{\longrightarrow}D^b_0(\check{Y}).
\end{equation*}
\end{cor}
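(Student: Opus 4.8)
The plan is to deduce the equivalence from a comparison of two $A_n$-configurations of spherical objects, one on each side of the mirror. By Theorem \ref{main_thm}, the SYZ transformation $\mathcal{F}$ sends each graded Lagrangian sphere $\tilde{L}_i$ to the spherical object $\mathcal{E}_i=(E_i,\mathcal{O}_{E_i}(-1))$. Since $\{\tilde{L}_i\}_{i=1}^n$ generates $D^b\textrm{Fuk}_0(Y,\omega)$ and $\{\mathcal{E}_i\}_{i=1}^n$ generates $D^b_0(\check{Y})$ by construction, it suffices to show that $\mathcal{F}$ is fully faithful on these generating collections; essential surjectivity onto $D^b_0(\check{Y})$ is then automatic, as an exact functor whose image contains a generating set is essentially surjective onto the triangulated category they generate.

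First I would assemble the total endomorphism $A_\infty$-algebras $\mathcal{A}:=\bigoplus_{i,j}\textrm{hom}_{\textrm{Fuk}_0(Y,\omega)}(\tilde{L}_i,\tilde{L}_j)$ and $\mathcal{B}:=\bigoplus_{i,j}\textrm{hom}_{D^b_0(\check{Y})}(\mathcal{E}_i,\mathcal{E}_j)$. The equality of morphism spaces established just above the statement supplies an isomorphism $\mathcal{A}\cong\mathcal{B}$ of graded vector spaces respecting the $A_n$-configuration structure, matching the distinguished generators $r_i,s_i,p_i,q_i$ with their mirror counterparts. To upgrade this to an isomorphism of $A_\infty$-algebras I would compare the operations on both sides. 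On the Fukaya side, the maximum principle rules out nonconstant holomorphic disks bounded by the spheres, so every higher product $\mathfrak{m}_k$ with $k\geq3$ vanishes and $\mathcal{A}$ is formal, with $\mathfrak{m}_2$ computed by counting holomorphic triangles as in \cite[Section 5.3]{IUU10}. On the mirror side, $\mathcal{B}$ is the Ext-algebra of an $A_n$-configuration, whose structure is governed by \cite{ST01} and is likewise formal with $\mathfrak{m}_2$ given by the Yoneda product.

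The key step, and the main obstacle, is to verify that $\mathcal{F}$ intertwines the two products $\mathfrak{m}_2$, i.e. that the geometric SYZ transformation respects composition. Since both $\mathcal{A}$ and $\mathcal{B}$ are formal $A_n$-configurations with identical underlying graded spaces, any graded isomorphism sending the generators to the correct elements automatically matches $\mathfrak{m}_2$, so the problem reduces to a finite check that $\mathcal{F}$ carries $r_i,s_i,p_i,q_i$ to the Yoneda generators of $\mathcal{B}$. I expect the degree-zero and degree-two identity and dual classes to match by construction, while the degree-one morphisms $p_i,q_i$ (the intersection points between adjacent spheres) must be shown to correspond to the canonical extension classes linking $\mathcal{O}_{E_i}(-1)$ and $\mathcal{O}_{E_{i\pm1}}(-1)$ along the node $E_i\cap E_{i\pm1}$; this is where the explicit local SYZ picture of Section \ref{HMS} does the work.

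With the $A_\infty$-isomorphism $\mathcal{A}\cong\mathcal{B}$ in hand, I would conclude by the standard principle that a quasi-isomorphism of the endomorphism $A_\infty$-algebras of generating collections induces an equivalence between the associated categories of twisted complexes, hence between the triangulated categories they generate. Therefore $\mathcal{F}$ descends to an equivalence $D^b\textrm{Fuk}_0(Y,\omega)\overset{\cong}{\to}D^b_0(\check{Y})$, as claimed.
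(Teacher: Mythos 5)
Your proposal is essentially the paper's argument: reduce to the two $A_n$-configurations of spherical objects $\{\tilde{L}_i\}$ and $\{\mathcal{E}_i\}$, identify their endomorphism algebras, and conclude an equivalence of the triangulated categories they generate via formality. The one refinement worth making is that the paper's proof runs entirely through the \emph{intrinsic} formality of the $A_n$-configuration algebra (Seidel--Thomas \cite[Lemma 4.21]{ST01}, as used in the proofs of Theorem 28 and Lemma 40 of \cite{IUU10}): since that graded algebra admits, up to quasi-isomorphism, only the trivial $A_\infty$-structure, it is enough to know that the cohomology-level morphism algebras on both sides are isomorphic to it. This does two things for you at once. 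First, it supplies the justification you are currently only gesturing at for the claim that the Ext-algebra $\mathcal{B}$ of the sheaves $\mathcal{E}_i$ is formal --- that is not automatic and is exactly what intrinsic formality delivers. Second, it makes your ``key step'' --- the finite check that the geometric transformation $\mathcal{F}$ intertwines $\mathfrak{m}_2$ and carries $r_i,s_i,p_i,q_i$ to the Yoneda generators --- unnecessary; indeed you half-notice this yourself when you remark that any graded isomorphism matching the generators automatically matches the products. This matters here because, as the paper points out in the remark following the corollary, $\mathcal{F}$ is defined by SYZ transformations on objects only, so there is no geometric map on morphisms available to check against; the morphism-level identification is abstract, and intrinsic formality is what licenses it.
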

\begin{proof}
This follows from the intrinsic formality of Seidel-Thomas \cite[Lemma 4.21]{ST01} as in the proofs of Theorem 28 and Lemma 40 in Ishii-Ueda-Uehara \cite{IUU10}.
\end{proof}
\begin{nb}
We have defined using SYZ transformations the functor $\mathcal{F}$ on objects only, and the proof that this gives an equivalence of triangulated categories is by computing morphisms on both sides. It would be desirable if we can use SYZ transformations to define the functor on morphisms as well. We plan to address this in the future.
\end{nb}
\begin{nb}
In \cite[Section 1.3]{ST01}, Seidel and Thomas mentioned that the braid group $B_{n+1}$-action on the derived Fukaya category of a symplectic manifold $Y$ defined by an $A_n$-configuration of Lagrangian spheres $L_1,\ldots,L_n$ is mirror to the $B_{n+1}$-action on the derived category of coherent sheaves of the mirror manifold $\check{Y}$ defined by the $A_n$-configuration of spherical objects $\mathcal{E}_1,\ldots,\mathcal{E}_n$ which are mirror to $L_1,\ldots,L_n$. To prove such a statement, we need to find a functor implementing the HMS equivalence $D^b\textrm{Fuk}(Y)\overset{\cong}{\rightarrow} D^b(\check{Y})$ which is equivariant with respect to the braid group actions on both sides. We conjecture that the geometric functor $\mathcal{F}:D^b\textrm{Fuk}_0(Y,\omega)\overset{\cong}{\rightarrow}D^b_0(\check{Y})$ we construct here satisfies this property.
\end{nb}

\section{Higher dimensional cases}\label{Higher_dim}

Most of the constructions in Sections \ref{Mirror} and \ref{HMS} can be generalized to higher dimensions. But they are not as explicit as in the 2-dimensional case. In this section, we give a brief sketch, leaving the details to a future paper.

Let $N=\bZ^n$ and $N'=\bZ^{n-1}\times\{0\}\subset N$. Let $P$ be a lattice polytope lying inside the hyperplane $N'_\bR\times\{1\}\subset N_\bR$. Let $\sigma$ be the cone over $P$ with vertex at the origin $0\in N_\bR$. Then the affine toric variety $X_\sigma$ associated to $\sigma$ is an isolated Gorenstein toric singularity. A resolution of singularities of $X_\sigma$ can be constructed by choosing a triangulation $\mathscr{P}$ of $P$. The fan $\Sigma$ consisting of cones over cells in $\mathscr{P}$ defines a smooth toric variety $X_\Sigma$ which yields a crepant resolution $X_\Sigma\to X_\sigma$. $X_\Sigma$ is a toric Calabi-Yau $n$-fold, meaning that the canonical line bundle $K_{X_\Sigma}$ is trivial. A 3-dimensional example is given by $X_\Sigma=K_{\bP^2}$.

Let $w_0,w_1,\ldots,w_m\in N'$ be the vertices of $\mathscr{P}$. Without loss of generality, we can assume that $w_0=0$. Then the primitive generators of the rays of $\Sigma$ are given by
\begin{equation*}
v_i:=(w_i,1)\in N,\ i=0,1,\ldots,m.
\end{equation*}
Consider the affine variety
\begin{equation*}
Y=\{(u,v,z_1,\ldots,z_{n-1})\in\bC^2\times(\bC^\times)^{n-1}|uv=f(z_1,\ldots,z_{n-1})\},
\end{equation*}
where
\begin{equation*}
f(z_1,\ldots,z_{n-1})=\sum_{i=0}^m a_iz_1^{w_i^1}\ldots z_{n-1}^{w_i^{n-1}}
\end{equation*}
is a Laurent polynomial on $(\bC^\times)^{n-1}$. This is a noncompact Calabi-Yau $n$-fold (again meaning that $K_Y$ is trivial). We equip $Y$ with the K\"ahler structure
\begin{equation*}
\omega=-\frac{\sqrt{-1}}{2}\Bigg(du\wedge d\bar{u}+dv\wedge d\bar{v}+\sum_{j=1}^{n-1}\frac{dz_j\wedge d\bar{z}_j}{|z_j|^2}\Bigg)\Bigg|_Y.
\end{equation*}

Similar to Section \ref{Mirror}, we can construct a Lagrangian torus fibration on $(Y,\omega)$ using the method of \cite{Go01,G01}. So we consider the Hamiltonian $S^1$-action
\begin{equation*}
e^{\sqrt{-1}\theta}\cdot(u,v,z_1,\ldots,z_{n-1})=(e^{\sqrt{-1}\theta}u,e^{-\sqrt{-1}\theta}v,z_1,\ldots,z_{n-1})
\end{equation*}
on $Y$ whose moment map is given by
\begin{equation*}
\mu:Y\to\bR,\ (u,v,z_1,\ldots,z_{n-1})\mapsto\frac{1}{2}(|u|^2-|v|^2).
\end{equation*}
Fix $\lambda\in\bR$, the reduced symplectic structure on $Y_\lambda:=\mu^{-1}(\lambda)/S^1$ can be explicitly computed to be
\begin{equation*}
\omega_\lambda=-\frac{\sqrt{-1}}{2}\Bigg(\frac{df\wedge d\bar{f}}{2\sqrt{\lambda^2+|f|^2}}+\sum_{j=1}^{n-1}\frac{dz_j\wedge d\bar{z}_j}{|z_j|^2}\Bigg).
\end{equation*}

Observe that the symplectic manifold $(Y_\lambda,\omega_\lambda)$ is symplectomorphic to $((\bC^\times)^{n-1},\omega_0)$ where $\omega_0$ is the standard symplectic form
\begin{equation*}
\omega_0=\sum_{j=1}^{n-1}\frac{dz_j\wedge d\bar{z}_j}{|z_j|^2}
\end{equation*}
on $(\bC^\times)^{n-1}$. Let $\Phi_\lambda:Y_\lambda\to(\bC^\times)^{n-1}$ be a symplectomorphism and let $\pi_\lambda:\mu^{-1}(\lambda)\to Y_\lambda$ be the quotient map. Also denote by $\textrm{Log}$ the map
\begin{equation*}
\textrm{Log}:(\bC^\times)^{n-1}\to\bR^{n-1},\ (z_1,\ldots,z_{n-1})\mapsto(\log|z_1|,\ldots,\log|z_{n-1}|).
\end{equation*}
Then the map $\rho:Y\to B:=\bR^n$ defined by
\begin{equation*}
\rho(u,v,z_1,\ldots,z_{n-1})=(\textrm{Log}\circ\Phi_\lambda\circ\pi_\lambda)(u,v,z_1,\ldots,z_{n-1})
\end{equation*}
for $(u,v,z_1,\ldots,z_{n-1})\in\mu^{-1}(\lambda)$ is a Lagrangian torus fibration. The appearance of $\Phi_\lambda$ here makes the mirror construction much less explicit.

We proceed with a description of the discriminant locus of $\rho$. First of all, as in the 2-dimensional case, the singularities of $\rho$ is given by the fixed points of the $S^1$-action:
\begin{equation*}
\Delta:=\{(0,0,z_1,\ldots,z_{n-1})\in\bC^2\times(\bC^\times)^{n-1}|f(z_1,\ldots,z_{n-1})=0\}.
\end{equation*}
Its image under $\rho$, which is the discriminant locus $\Gamma$ of $\rho$, is an \emph{amoeba-shaped} subset
\begin{equation*}
\Gamma=\mathscr{A}\times\{0\}\subset B
\end{equation*}
lying inside the hyperplane $\bR^{n-1}\times\{0\}$ in $B$. Notice that $\Gamma$ is of real codimension one in $B$; this is in sharp contrast with the 2-dimensional situation. The locus over which the Lagrangian torus fibers bound holomorphic disks is given by
\begin{equation*}
H:=\mathscr{A}\times\bR\subset B.
\end{equation*}
This is an open set (real codimension zero) in $B$, but we will continue to call it the wall in $B$. Notice that the components of the wall $H$ are all invariant under vertical translations.

To construct the mirror, we follow the SYZ proposal. So we first take the fiberwise dual fibration over $B^{sm}:=B\setminus\Gamma$. This gives the semi-flat mirror
\begin{equation*}
\check{Y}_0=TB^{sm}/\Lambda,
\end{equation*}
whose complex structure is not globally defined since the affine structure on $B$ is singular. We need to modify the gluing of complex charts by incorporating instanton corrections. Now, by considering the projection map
\begin{equation*}
\pi:Y\to(\bC^\times)^{n-1},\ (u,v,z_1,\ldots,z_{n-1})\mapsto(z_1,\ldots,z_{n-1})
\end{equation*}
and the maximum principle, we see that a Lagrangian torus fiber of $\rho$ bounds a nonconstant holomorphic disk if and only if it is a fiber over a point in $H$. Any such fiber bounds one (family of) holomorphic disks in $Y$. Therefore, the corrected gluing formula looks the same as in the 2-dimensional case. The combinatorics of the triangulation $\mathscr{P}$ of $P$ then determines the geometry of the instanton-corrected mirror $\check{Y}$. The result is given by the complement $\check{Y}$ of the hypersurface $D=\{\chi^{(0,1)=1}\}$ in the toric Calabi-Yau $n$-fold $X_\Sigma$. See Auroux \cite[Section 3.3]{A09} and the recent work Abouzaid-Auroux-Katzarkov \cite{AAL11} for more details.

The projection map $\pi:Y\to(\bC^\times)^{n-1}$ is a conic bundle with singular fibers over
\begin{equation*}
\Delta:=\{(z_1,\ldots,z_{n-1})\in(\bC^\times)^2|f(z_1,\ldots,z_{n-1})=0\}.
\end{equation*}
Consider an embedding
\begin{equation*}
\gamma:B^{n-1}\to(\bC^\times)^{n-1}
\end{equation*}
such that $\gamma(\partial B^{n-1})\subset\Delta$ and $\gamma(B^{n-1}\setminus\partial B^{n-1})\cap\Delta=\emptyset$, where $B^{n-1}$ is the closed unit ball in $\bR^{n-1}$. Let
\begin{eqnarray*}
L_\gamma & := & \{(u,v,z_1,\ldots,z_{n-1})\in Y|\\
&  & \qquad\qquad|u|=|v|,\ (\Phi_0\circ\pi_0)(z_1,\ldots,z_{n-1})\in\gamma(B^{n-1})\}.
\end{eqnarray*}
Then $L_\gamma$ is a Lagrangian in $(Y,\omega)$.

If the image of $L_\gamma$ under $\rho$ is the closure of a bounded component $S$ in the complement of the discriminant locus $\Gamma=\mathscr{A}\times\{0\}$ in the hyperplane $\bR^{n-1}\times\{0\}\subset B$, then $L_\gamma$ is diffeomorphic to a sphere $S^n$. In this case, the restriction of $\rho$ to the closure $\bar{S}$ of $S$ in $\bR^{n-1}\times\{0\}$ gives a circle fibration
\begin{equation*}
\rho_{L_\gamma}:L_\gamma\to\bar{S}
\end{equation*}
which collapses along the boundary $\partial S$. Let $L_\gamma^\circ$ be the preimage of $S$ under $\rho$. Then $L_\gamma^\circ$ is of the form we considered in Section \ref{SYZ_transform}, i.e. it is a translate of the conormal bundle of $S\subset B$. Equip $L_\gamma$ with a flat $U(1)$-connection $\nabla$. We get an A-brane $(L_\gamma,\nabla)$.

The SYZ transformation $\mathcal{F}(L_\gamma,\nabla)$ will be given by a B-brane $(C,\check{\nabla})$ where $C$ is a compact toric divisor in $\check{Y}=X_\Sigma\setminus D$ and $\check{\nabla}$ is a $U(1)$-connection which defines a holomorphic line bundle $\mathscr{L}$ over $C$. More precisely, if $(x_1,\ldots,x_{n-1},x_n)$ are the affine coordinates on $S$ and $(\xi_1,\ldots,\xi_n)$ are the fiber coordinates on $T^*S$, then $L_{\gamma}$ is defined by $x_n=\lambda=0$ and $\xi_j=\xi_j(x_1,\ldots,x_{n-1})$ for $j=1,\ldots,n-1$. Here, $\xi_j(x_1,\ldots,x_{n-1})$ are $C^\infty$ functions satisfying
\begin{equation*}
\frac{\partial\xi_j}{\partial x_l}=\frac{\partial\xi_l}{\partial x_j},\ j,l=1,\ldots,n-1.
\end{equation*}
By our definition in Section \ref{SYZ_transform}, the connection $\check{\nabla}$ is then given by
\begin{equation*}
\check{\nabla}=d+2\pi\sqrt{-1}\sum_{j=1}^{n-1}\xi_j(x_1,\ldots,x_{n-1})dy_j,
\end{equation*}
where $(y_1,\ldots,y_n)$ are the fiber coordinates on $TS$. The first Chern class of $\mathscr{L}$ is the class represented by
\begin{equation*}
\frac{\sqrt{-1}F_{\check{\nabla}}}{2\pi}=-\sum_{j,l=1}^{n-1}\frac{\partial\xi_j}{\partial x_l}dx_l\wedge dy_j.
\end{equation*}
Similar to the 2-dimensional case, the holomorphic structure of $\mathscr{L}$ are determined by certain winding numbers of $\gamma$ relative to a reference $\gamma_0$. This defines a functor (on the object level)
\begin{equation*}
\mathcal{F}:D^b\textrm{Fuk}_0(Y,\omega)\to D^b(\check{Y})
\end{equation*}
from the derived Fukaya category generated by graded Lagrangian spheres of the form $\tilde{L}_{\gamma}$ to the derived category of coherent sheaves of the mirror $\check{Y}$, which we expect to be an embedding of triangulated categories.

\end{document}